\title{Minimizing maximum lateness in\\ two-stage projects by tropical optimization\thanks{Kybernetika 58(5), 816-841 (2022). https://www.kybernetika.cz/content/2022/5/816}}
\author{N.~Krivulin\thanks{Saint Petersburg State University, Faculty of Mathematics and Mechanics, 28 Universitetsky Ave., St.~Petersburg 198504, Russia, nkk@math.spbu.ru.}
\and
S.~Sergeev\thanks{University of Birmingham, School of Mathematics, Edgbaston B15 2TT, UK, s.sergeev@bham.ac.uk.}
}
\date{}
\newtheorem{theorem}{Theorem}
\newtheorem{lemma}[theorem]{lemma}
\newtheorem{corollary}[theorem]{corollary}
\newtheorem{proposition}[theorem]{Proposition}
\theoremstyle{definition}
\begin{document}

\maketitle

\begin{abstract}
We are considering a two-stage optimal scheduling problem, which involves two similar projects with the same starting times for workers and the same deadlines for tasks. It is required that the
starting times for workers and deadlines for tasks should be optimal for the first-stage project and, under this condition, also for the second-stage project. Optimality is measured with respect to the maximal lateness (or maximal delay) of tasks, which has to be minimized. We represent this problem as a problem of tropical pseudoquadratic optimization and show how the existing methods of tropical optimization and tropical linear algebra yield a full and explicit solution for this problem.
\\

\textbf{Keywords:} tropical optimization, tropical linear algebra, minimax optimization problem, project scheduling, maximum lateness.
\\

\textbf{MSC (2020):} 90C24, 15A80, 90C47, 90B50, 90B35.
\end{abstract}

\section{Introduction}


As an important component of project management, project scheduling is concerned with the development of optimal schedules of activities that comprise a project, subject to various constraints \cite{Demeulemeester2002Project,Neumann2003Project,Tkindt2006Multicriteria}. The scheduling objectives are normally formulated in terms of time-oriented criteria to optimize such as makespan, lateness and tardiness. In multicriteria and multilevel scheduling, further objectives can be added, which take into account the project cost, profit, resource allocation or consumption, etc. The scheduling constraints may include temporal constraints (time bounds for and relationships between activities) as well as material and manpower resource requirements, budget limitations and others.

In the general case, the project scheduling problems with constraints of different types may be rather complicated and even known to be $NP$-hard to solve. The solution approaches used to handle these problems involve methods and computational schemes of linear, integer and mixed integer linear programming, combinatorial and discrete optimization, which offer algorithmic technique to obtain exact or approximate numerical solutions \cite{Demeulemeester2002Project,Tkindt2006Multicriteria}.

The project scheduling problems, which have only time-oriented objectives and temporal constraints, can normally be formulated and solved as linear programs using computational algorithms of linear programming. This approach typically offers a quite efficient numerical technique to find a solution or determine that no solution exists, but does not allow to derive all solutions analytically.

Another approach to solve temporal scheduling problems is based on models and methods of tropical (idempotent) algebra which is concerned with the theory and applications of semirings with idempotent addition \cite{Baccelli1993Synchronization,Butkovic2010Maxlinear,Golan2003Semirings,Gondran2008Graphs,Heidergott2006Maxplus,Kolokoltsov1997Idempotent}. Project scheduling problems served to motivate early research \cite{Cuninghamegreen1962Describing,Giffler1963Scheduling} and still present an important application domain for tropical mathematics \cite{Bouquard2006Application,Butkovic2010Maxlinear,Goto2009Robust}.

In the framework of tropical algebra, many temporal project scheduling problems can be represented as optimization problems and then analytically solved using methods and techniques of tropical optimization in explicit form. Examples of the solution include results on both single-criterion problems \cite{Krivulin2015Extremal,Krivulin2017Direct,Krivulin2017Tropical,Krivulin2017Tropicaloptimization} and multicriteria problems \cite{Krivulin2020Tropical}.

A more theoretical motivation for this work comes from a general idea to develop the theory and practice of multi-criteria and multi-level tropical optimization problems.  In view of the above mentioned development of tropical optimization problems applied to project scheduling, as well as successful development of algorithms to solve some other types of tropical optimization problems such as tropical linear programming and tropical linear-fractional programming \cite{TropicalSimplex, TropicalMPG}, it seems to be a promising research direction. Note, however, that except for the tropical bi-objective problem of \cite{Krivulin2020Tropical} and the tropical bi-level optimization problem of \cite{SLiu-20}, we are not aware of any other tropical bi-objective or bi-level optimization problems considered in the existing literature.

In this paper, a two-stage project scheduling problem under temporal objectives and constraints is considered. This problem can be considered as a lexicographic bi-objective optimization problem \cite{Ehrgott2005Multicriteria}, where the objectives are taken into account in a hierarchical order. We show how this problem can be represented in terms of tropical algebra as a multidimensional two-stage tropical optimization problem. We completely solve this problem, that is, we derive an analytical solution in compact vector form and evaluate the computational complexity of the solution.

The rest of our paper is organized as follows. In Section~\ref{s:scheduling}, we give some background on project scheduling problems and formulate the problem, which we are going to solve in the present paper. In Section~\ref{s:algebra}, we recall some basic operations of tropical linear algebra, in particular, tropical traces and Kleene stars. While most of the material given there is well-known (see the references in the text), the method that we use in computation of tropical traces in Subsection~\ref{ss:traces} was developed only very recently and here we give a more detailed analysis of its computational complexity. The solution of the scheduling problem is given in Section~\ref{s:solution}, see Theorem~\ref{T:upper-level} for the main result. In Section~\ref{s:complexity}, we discuss the computational complexity of the solution.

\section{Project scheduling problems}
\label{s:scheduling}

In this section, we describe one- and two-stage project scheduling problems that motivate the research and illustrate the application of the results. As a real-world example that provides an appropriate applied context for the study, one can consider a problem of optimal scheduling health care activities in a hospital, where each patient is assigned to one or more courses of treatment (medications, therapeutic procedures, diagnostic tests, etc.), and each course is carried out individually on one or together on more patients. The patient is discharged from the hospital as soon as all prescribed treatments are completed.

Each course involves a basic component (primary treatment) and auxiliary component (secondary treatment), which run simultaneously. In accordance to medical, technological, regulatory, managerial or other requirements, a set of temporal constraints is imposed on the start time of the courses and the discharge date of the patients. For each pair of a patient and a course, the prescribed duration of the course of the primary and secondary treatments is specified. Furthermore, the maximum time lag between the start time of the components of the course and the discharge due date of the patient is also given. Finally, lower and upper bounds (box constraints) are imposed on the start date for both components of each course and on the due date for each patient.

The primary treatment components of the courses are scheduled first to obtain optimal sets of the start dates for courses and the corresponding discharge due dates for patients. The schedule has to minimize the maximum lateness of the course completion dates with respect to due dates over all patients, subject to the temporal constraints described above. Next, the set of optimal solutions obtained for this problem are used in forming the feasible set to solve the scheduling problem for the secondary treatment components under the same minimum lateness criterion.

In the rest of the section, we show how this scheduling problem can be represented as a minimax two-stage optimization problem. To provide a general description of the scheduling model, we use the terms tasks and workers instead of patients and treatments appeared in the above application example.

\subsection{One-stage project scheduling problems}

Consider a project (a treatment plan in the hospital setting) that involves $m$ tasks (patients) to be performed (treated) in parallel by $n$ workers (courses of treatment), subject to temporal constraints in the form of start-finish and due date-start precedence relationships, time boundaries on the start times of workers and the due dates of tasks. The start-finish constraints specify the minimum allowed time lag for each pair worker-task between the start of the worker (the beginning of the course) and the finish of the task (the discharge of the patient). Each task is assumed to finish immediately after all start-finish constraints for its finish time are satisfied.

The due date-start constraints specify the minimum allowed time lag for each pair task-worker between the due date of the task (the due date of the patient's discharge) and the start of the worker (the beginning of the course). The boundary constraints include the release times and release deadlines for the workers to start (for the courses to begin), and the earliest allowed time and deadlines for the tasks to finish (for the patients to discharge). The above constraints are considered as hard restrictions, which cannot be violated.

In contrast to the above constraints, the due time constraints indicate the desirable times for tasks to finish, and are not set in advance. The project scheduling problems of interest are to determine the start time for each worker and the due time for each task to meet the minimum lateness objective, which is to minimize the maximum difference between the finish time and the due time over all tasks.

In the hospital context, the objective aims at the development of a schedule of the treatment plan for all patients to minimize the maximum lateness of courses with respect to discharge due dates of patients.

For each task $i=1,\ldots,m$, we denote its finish time by $f_{i}$, and for each worker $j=1,\ldots,n$, we denote his or her start time by $x_{j}$. Let $a_{ij}$ be the minimum allowed time lag between the start of worker $j$ and the finish of task $i$. Then, the start-finish constraints for task $i$ take the form of inequalities $a_{ij}+x_{j}\leq f_{i}$ for all $j=1,\ldots,n$, one of which is an equality. Combining all inequalities for each $j$ yields the equalities
\begin{equation*}
\max_{1\leq j\leq n}(a_{ij}+x_{j})
=
f_{i},
\quad
i=1,\ldots,m.
\end{equation*}

For each task $i=1,\ldots,m$, we denote its due time by $y_{i}$. Furthermore, let $-b_{ji}$ denote the minimum (negative) allowed time lag between the due time of task $i$ and the start time of worker $j$. The due date-start constraints are given by the inequalities $-b_{ji}+y_{i}\leq x_{j}$ for all $i=1,\ldots,m$. Equivalently, they can be written as $y_i\leq b_{ji}+x_j$: the constraints that do not allow the due dates to be too lax with respect to workers' starting times. These constraints can be combined into the inequalities
\begin{equation*}
\max_{1\leq i\leq m}(-b_{ji}+y_{i})
\leq
x_{j},
\quad
j=1,\ldots,n.
\end{equation*}

Let $q_{i}$ and $r_{i}$ be the earliest allowed time to finish and deadline for task $i=1,\ldots,m$, and $g_{j}$ and $h_{j}$ be the release time and release deadline for worker $j=1,\ldots,n$. The due time of tasks and start time of workers must satisfy the box constraints
\begin{equation*}
q_{i}\leq y_{i}\leq r_{i},
\quad
i=1,\ldots,m;
\qquad
g_{j}\leq x_{j}\leq h_{j},
\quad
j=1,\ldots,n.
\end{equation*}

Finally, the maximum lateness over all tasks is defined as follows:
\begin{equation*}
\max_{1\leq i\leq m}(f_{i}-y_{i})
=
\max_{1\leq i\leq m}\left(\max_{1\leq j\leq n}(a_{ij}+x_{j})-y_{i}\right).
\end{equation*}

Given the parameters $a_{ij}$, $b_{ij}$, $q_{i}$, $r_{i}$, $g_{j}$ and $h_{j}$ for all $i=1,\ldots,m$, and $j=1,\ldots,n$, the project scheduling problem can be formulated to find the unknown vectors of the start time of workers $\bm{x}=(x_{1},\ldots,x_{n})^{T}$, and of the due time for tasks $\bm{y}=(y_{1},\ldots,y_{m})^{T}$, for which the maximum lateness is minimal:
\begin{equation}
\begin{aligned}
\min_{\bm{x},\bm{y}}
&&&
\max_{1\leq i\leq m}
\left(\max_{1\leq j\leq n}(a_{ij}+x_{j})-y_{i}\right);
\\
\text{s.t.}
&&&
\max_{1\leq i\leq m}(-b_{ji}+y_{i})
\leq
x_{j},
\quad
g_{j}\leq x_{j}\leq h_{j},
\quad
j=1,\ldots,n;
\\
&&&
q_{i}\leq y_{i}\leq r_{i},
\quad
i=1,\ldots,m.
\end{aligned}
\label{P-Upper-Level}
\end{equation}

We now assume that there is another project with the same numbers $m$ and $n$ of tasks and workers, and with the same box constraints for start times of workers and due dates for tasks. For each task $i=1,\ldots,m$ in this project, we denote its due date by $v_{i}$, and for each worker $j=1,\ldots,n$, we denote the start time by $u_{j}$. Let $c_{ij}$ be the minimum time lag between the start time of worker $j$ and the finish time of task $i$, and let $-d_{ij}$ be the minimum (negative) time lag between the due date for task $j$ and the start time of worker $i$.

Suppose that the parameters $c_{ij}$, $d_{ij}$, $q_{i}$, $r_{i}$, $g_{j}$ and $h_{j}$ are given for all $i=1,\ldots,m$, and $j=1,\ldots,n$, and consider the problem of finding vectors of start times $\bm{u}=(u_{1},\ldots,u_{n})^{T}$ and of due dates $\bm{v}=(v_{1},\ldots,v_{m})^{T}$ that minimize the maximum lateness in the project:
\begin{equation}
\begin{aligned}
\min_{\bm{u},\bm{v}}
&&&
\max_{1\leq i\leq m}
\left(\max_{1\leq j\leq n}(c_{ij}+u_{j})-v_{i}\right);
\\
\text{s.t.}
&&&
\max_{1\leq i\leq m}(-d_{ji}+v_{i})
\leq
u_{j},
\quad
g_{j}\leq u_{j}\leq h_{j},
\quad
j=1,\ldots,n;
\\
&&&
q_{i}\leq v_{i}\leq r_{i},
\quad
i=1,\ldots,m.
\end{aligned}
\label{P-Lower-Level}
\end{equation}

Note that problems \eqref{P-Upper-Level} and \eqref{P-Lower-Level} can be readily rewritten as linear programs, and solved by appropriate computational techniques of linear programming. However, in the general case, this approach implements numerical iterative procedures and does not offer direct exact solutions in an explicit form, which we are going to use.

\subsection{Two-stage project scheduling problem}

Suppose that both projects described above have common start times for workers and due dates for tasks. Moreover, assume one of the projects to be a first-stage project, for which the start times and due dates must be optimal to minimize the maximum lateness. The other project is a second-stage project such that its start times and due dates must be optimal for the first-stage project and, only under this condition, for the second-stage project itself.

We then represent the overall problem as a two-stage optimization problem with the second-stage problem formulated as \eqref{P-Upper-Level}, and the first-stage problem as \eqref{P-Lower-Level}:
\begin{equation}
\begin{aligned}
\min_{\bm{x},\bm{y}}
&&&
\max_{1\leq i\leq m}
\left(\max_{1\leq j\leq n}(a_{ij}+x_{j})-y_{i}\right);
\\
\text{s.t.}
&&&
\max_{1\leq i\leq m}(-b_{ji}+y_{i})
\leq
x_{j},
\quad
j=1,\ldots,n;
\\
&&&
(\bm{x},\bm{y})
\in
\arg\min_{\bm{u},\bm{v}}
\bigg\{
\max_{1\leq i\leq m}\left(\max_{1\leq j\leq n}(c_{ij}+u_{j})-v_{i}\right):
\\
&&&\qquad\qquad
\max_{1\leq i\leq m}(-d_{ji}+v_{i})
\leq
u_{j},
\quad
g_{j}\leq u_{j}\leq h_{j},
\quad
j=1,\ldots,n;
\\
&&&\qquad\qquad
q_{i}\leq v_{i}\leq r_{i},
\quad
i=1,\ldots,m
\bigg\}.
\end{aligned}
\label{P-Bi-Level-Simple}
\end{equation}

In the hospital setting, the two-stage problem is to minimize the maximum lateness of courses with respect to the discharge due dates of patients in a two-stage treatment plan, subject to a set of given temporal constraints.

This is the main problem which we are going to solve in this paper. In the next sections, we will reformulate it in terms of tropical algebra. Its explicit analytical solution will be then presented in Section \ref{s:solution}.

\section{Tropical algebra}
\label{s:algebra}

\subsection{Idempotent semirings}

Max-plus semiring $\mathbb{R}_{\max,+}$ is the set $\mathbb{R}\cup\{-\infty\}$ equipped with the associative and commutative operations of addition $a\oplus b=\max(a,b)$ and multiplication $a\otimes b=a+b$, where $\mathbb{1}=0$ is the unit element (i.\,e., such that $\mathbb{1}\otimes a=a$ for all $a\in\mathbb{R}$) and $\mathbb{0}=-\infty$ is the zero element (i.\,e., neutral with respect to addition and such that $\mathbb{0}\otimes a=\mathbb{0}$ for all $a\in\mathbb{R}_{\max,+}$). Observe that the addition is idempotent: $a\oplus a=a$, and the multiplication is distributive over addition and invertible: $a^{-1}=-a$ for any $a\in\mathbb{R}$.

Max-plus semiring is an example of idempotent semifield $(\mathbb{X},\oplus,\otimes,\mathbb{0},\mathbb{1})$ which is defined as a set $\mathbb{X}$ closed under the associative and commutative operations: idempotent addition $\oplus$ and invertible multiplication $\otimes$ (the multiplication sign $\otimes$ is normally omitted in written expressions). This semifield is assumed algebraically closed meaning that, for any $a\in\mathbb{X}$ and natural $m$, the equation $x^{m}=a$ has a solution, where $x^{m}$ denotes the $m$-fold tropical product of $x$ with itself. It is also assumed linearly ordered meaning that the canonical order $a\leq b$ given by the equality $a\oplus b=b$ is total: either the condition $a\leq b$ or the condition $b\leq a$ holds for any $a,b\in\mathbb{X}$.

Both addition and multiplication are monotone with respect to each argument (i.\,e., the inequality $a\leq b$, where $a,b\in\mathbb{X}$ results in the inequalities $a\oplus c\leq b\oplus c$ and $ac\leq bc$ for any $c\in\mathbb{X}$), whereas inversion is antitone (i.\,e., the inequality $a\leq b$, where $a,b\ne\mathbb{0}$, yields $a^{-1}\geq b^{-1}$). Furthermore, addition possesses the extremal property (the majority law) meaning that $a\leq a\oplus b$ and $b\leq a\oplus b$. Finally, the inequality $a\oplus b\leq c$ is equivalent to the pair of inequalities $a\leq c$ and $b\leq c$.

For the sake of our practical application, we only need $\mathbb{X}=\mathbb{R}_{\max,+}$. However, all our theoretical results are also true over any algebraically closed linearly ordered idempotent semifield $\mathbb{X}$ and will be formulated in terms of such semifield.

\subsection{Algebra of matrices}

Scalar operations can be extended to matrices and vectors in the usual way. In particular, let $\bm{A}=(a_{ij})$ and $\bm{B}=(b_{ij})$ be two matrices and $x$ be a scalar over $\mathbb{X}$. Provided that they have appropriate dimensions, one defines
\begin{equation*}
(\bm{A}\oplus \bm{B})_{ij}
=
a_{ij}\oplus b_{ij},
\qquad
(\bm{A}\otimes \bm{B})_{ij}
=
\bigoplus_{k}a_{ik}\otimes b_{kj},
\qquad
(x\otimes\bm{A})_{ij}
=
x\otimes a_{ij}.
\end{equation*}

The matrix multiplication operation gives rise to matrix powers for square matrices over $\mathbb{X}$, defined as
\begin{equation*}
\bm{A}^{k}
=
\underbrace{\bm{A}\otimes\dots\otimes\bm{A}}_{k}.
\end{equation*}

The product sign $\otimes$ will be systematically omitted below.

A matrix $\bm{A}$ of arbitrary dimension (in particular, a vector) is the zero matrix denoted by $\bm{0}$ if it has all entries equal to $\mathbb{0}$. A vector $\bm{x}$ is called regular if it has no zero entries. A matrix is called column-regular (row-regular) if all its columns (rows) are regular, which means that the matrix has no column (row) that consists entirely of zeros.

For a nonzero matrix $\bm{A}=(a_{ij})$, we define the multiplicative inverse transpose (the conjugate) $\bm{A}^{-}=(a_{ij}^{-})$ as the matrix for which $a_{ij}^{-}=a_{ji}^{-1}$ if $a_{ji}\ne\mathbb{0}$, and $a_{ij}^{-}=\mathbb{0}$ otherwise. When applied to nonzero column vectors $\bm{x}=(x_{i})\in\mathbb{X}^{n}$, this operation results in the row vector $\bm{x}^{-}=(x_{i}^{-})$ with the entries $x_{i}^{-}=x_{i}^{-1}$ if $x_{i}\ne\mathbb{0}$, and $x_{i}^{-}=\mathbb{0}$ otherwise.

For any square matrix $\bm{A}=(a_{ij})\in\mathbb{X}^{n\times n}$, the trace is given by
\begin{equation*}
\mathop\mathrm{tr}\bm{A}
=
\bigoplus_{i=1}^{n}a_{ii}.
\end{equation*}

For any matrices $\bm{A}$, $\bm{B}$ and $\bm{C}$ of appropriate size, and a scalar $x$, the following identities hold:
\begin{equation*}
\mathop\mathrm{tr}(\bm{A}\oplus\bm{B})
=
\mathop\mathrm{tr}\bm{A}
\oplus
\mathop\mathrm{tr}\bm{B},
\qquad
\mathop\mathrm{tr}(\bm{A}\bm{C})
=
\mathop\mathrm{tr}(\bm{C}\bm{A}),
\qquad
\mathop\mathrm{tr}(x\bm{A})
=
x\mathop\mathrm{tr}\bm{A}.
\end{equation*}

In order to formulate the solutions of vector inequalities in what follows, we need to introduce the following notion. For any matrix $\bm{A}\in\mathbb{X}^{n\times n}$, we define the following tropical trace function:
\begin{equation*}
\mathop\mathrm{Tr}(\bm{A})
=
\bigoplus_{k=1}^{n}
\mathop\mathrm{tr}\bm{A}^{k},
\label{e:TrA}
\end{equation*}
and then the asterisk operator or the Kleene star of $\bm{A}$ as the series
\begin{equation}
\bm{A}^{\ast}
=
\bigoplus_{k=0}^{\infty}\bm{A}^{k}.
\label{e:kls1}
\end{equation}

The following fact is well-known (see, e.\,g., \cite{Carre1971Algebra,Cuninghamegreen1979Minimax,Yoeli1961Note}).
\begin{proposition}
\label{p:klstrace}
Let $\bm{A}\in\mathbb{X}^{n\times n}$. Series \eqref{e:kls1} converges if and only if $\mathop\mathrm{Tr}(\bm{A})\leq\mathbb{1}$, and in this case
\begin{equation*}
\bm{A}^{\ast}=\bigoplus_{k=0}^{n-1}\bm{A}^{k}.
\end{equation*}
\end{proposition}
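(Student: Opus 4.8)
The plan is to analyze the powers $\bm{A}^k$ for $k \geq n$ and show that when $\mathop\mathrm{Tr}(\bm{A}) \leq \mathbb{1}$ they contribute nothing new to the series, and when $\mathop\mathrm{Tr}(\bm{A}) \not\leq \mathbb{1}$ the series diverges. The combinatorial heart of the argument is the well-known weighted-digraph interpretation of tropical matrix powers: identify $\bm{A} = (a_{ij})$ with a weighted digraph on $n$ nodes where the arc $i \to j$ carries weight $a_{ij}$ (absent when $a_{ij} = \mathbb{0}$), so that the entry $(\bm{A}^k)_{ij}$ equals the maximal weight of a walk of length exactly $k$ from $i$ to $j$, and $\mathop\mathrm{tr}\bm{A}^k = \bigoplus_i (\bm{A}^k)_{ii}$ is the maximal weight of a closed walk of length $k$. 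Consequently $\mathop\mathrm{Tr}(\bm{A}) \leq \mathbb{1}$ says precisely that every closed walk of length between $1$ and $n$ has weight $\leq \mathbb{1}$.

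First I would observe that any closed walk decomposes into a disjoint union of elementary cycles (cycles with no repeated node), each of length at most $n$, so its weight is the tropical product of the weights of these elementary cycles. Hence if every elementary cycle has weight $\leq \mathbb{1}$ — equivalently $\mathop\mathrm{Tr}(\bm{A}) \leq \mathbb{1}$, since every elementary cycle of length $\ell \le n$ contributes to $\mathop\mathrm{tr}\bm{A}^\ell$ — then every closed walk of every length has weight $\leq \mathbb{1}$. Next, for a walk of length $k \geq n$ from $i$ to $j$, the pigeonhole principle forces a repeated node, so the walk contains a cycle that can be excised; iterating, any such walk can be reduced to one of length $< n$ (from $i$ to $j$) by removing closed sub-walks, and each removed closed sub-walk has weight $\leq \mathbb{1}$. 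Since tropical multiplication is monotone, excising a factor $\leq \mathbb{1}$ cannot decrease the weight, so $(\bm{A}^k)_{ij} \leq \bigoplus_{\ell=0}^{n-1}(\bm{A}^\ell)_{ij}$ for all $k$; here I should take care to include the length-$0$ walk (the empty walk, contributing $\mathbb{1}$ on the diagonal) to handle the case where the reduction empties the walk, which is why the sum in the conclusion runs from $k=0$. Together with the trivial reverse inequality $\bigoplus_{\ell=0}^{n-1}\bm{A}^\ell \leq \bigoplus_{k=0}^{\infty}\bm{A}^k$, this gives convergence and the stated finite formula $\bm{A}^{\ast} = \bigoplus_{k=0}^{n-1}\bm{A}^k$.

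For the converse direction I would argue the contrapositive: if $\mathop\mathrm{Tr}(\bm{A}) \not\leq \mathbb{1}$, then some elementary cycle, say of length $\ell \leq n$ through a node $i$, has weight $c$ with $c > \mathbb{1}$, i.e. $c^p \to \infty$ in the linear order as $p \to \infty$ (using that $\mathbb{X}$ is linearly ordered and that $c > \mathbb{1}$ forces the powers of $c$ to be cofinal). Then $(\bm{A}^{p\ell})_{ii} \geq c^p$, so the diagonal entry $i$ of the partial sums $\bigoplus_{k=0}^{N}\bm{A}^k$ is unbounded as $N \to \infty$, and the series \eqref{e:kls1} cannot converge. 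The main obstacle — or rather the step demanding the most care — is making the walk-reduction argument fully rigorous while keeping track of the inequality direction: one must verify that the closed sub-walk excised at each step indeed has length $\leq n$ (so that the hypothesis $\mathop\mathrm{Tr}(\bm{A}) \leq \mathbb{1}$ applies to bound its weight by $\mathbb{1}$), and that monotonicity of $\otimes$ is invoked correctly so that deletion of a $\leq \mathbb{1}$ factor gives a walk of no smaller weight. All remaining manipulations are routine given the trace and power identities recorded above, so I would not belabor them.
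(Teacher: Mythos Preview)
The paper does not actually prove this proposition: it is stated as a well-known fact with references to Carr\'e, Cuninghame-Green, and Yoeli, and no argument is supplied. Your proof is the standard weighted-digraph argument that one finds in those references, and it is correct. The cycle-excision step and the converse via an elementary cycle of weight $>\mathbb{1}$ are exactly the usual ingredients; the only place to be slightly more careful is the meaning of ``convergence'' in a general idempotent semifield: since the partial sums are nondecreasing entrywise, convergence here means that they eventually stabilise, and your observation that the $(i,i)$ entry dominates the strictly increasing sequence $c^{p}$ suffices to show they do not stabilise, without needing to invoke that the powers of $c$ are cofinal in $\mathbb{X}$.
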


Note that the condition $\mathop\mathrm{Tr}(\bm{A})\leq\mathbb{1}$ is equivalent to the condition that the inequality $\mathop\mathrm{tr}\bm{A}^{k}\leq\mathbb{1}$ is valid for each $k=1,\ldots,n$.

\begin{corollary}
\label{c:klstrace}
If $\mathop\mathrm{Tr}(\bm{A})\leq\mathbb{1}$, then for any $r\geq n$, the following equalities hold
\begin{equation*}
\bm{A}^{\ast}
=
\bigoplus_{k=0}^{r-1}\bm{A}^{k},
\qquad
\mathop\mathrm{Tr}(\bm{A})
=
\bigoplus_{k=1}^{r}\mathop\mathrm{tr}\bm{A}^{k}.
\end{equation*}
\end{corollary}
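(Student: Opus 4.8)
The plan is to derive both identities purely from Proposition~\ref{p:klstrace} together with the extremal (majority) property of $\oplus$, the monotonicity of the operations, and the elementary identities for matrix products and traces recorded above; there is no need to revisit the convergence analysis.

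First I would prove the identity for $\bm{A}^{\ast}$ by a squeezing argument. Since $\mathop\mathrm{Tr}(\bm{A})\leq\mathbb{1}$, Proposition~\ref{p:klstrace} guarantees that the series \eqref{e:kls1} converges and that $\bm{A}^{\ast}=\bigoplus_{k=0}^{n-1}\bm{A}^{k}$. Moreover, for every $k\geq0$ the term $\bm{A}^{k}$ is dominated (entrywise) by the whole convergent sum, so $\bm{A}^{k}\leq\bm{A}^{\ast}$. Hence, for any $r\geq n$,
\begin{equation*}
\bigoplus_{k=0}^{n-1}\bm{A}^{k}
\leq
\bigoplus_{k=0}^{r-1}\bm{A}^{k}
\leq
\bm{A}^{\ast}
=
\bigoplus_{k=0}^{n-1}\bm{A}^{k},
\end{equation*}
where the first inequality is the majority law (a partial sum is bounded by a longer one), and the second holds because every summand $\bm{A}^{k}$ with $k\leq r-1$ satisfies $\bm{A}^{k}\leq\bm{A}^{\ast}$, so their $\oplus$-sum does too. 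All three members therefore coincide, which is the first claimed equality.

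Next I would obtain the trace identity as a consequence of the one just proved, rather than trying to estimate $\mathop\mathrm{tr}\bm{A}^{k}$ for $k>n$ in isolation. Left-multiplying $\bigoplus_{k=0}^{r-1}\bm{A}^{k}=\bm{A}^{\ast}$ by $\bm{A}$ and using distributivity of the matrix product over $\oplus$ gives $\bigoplus_{k=1}^{r}\bm{A}^{k}=\bm{A}\bm{A}^{\ast}$ for every $r\geq n$; in particular the matrix $\bigoplus_{k=1}^{r}\bm{A}^{k}$ does not depend on $r$ as long as $r\geq n$. Applying $\mathop\mathrm{tr}$ and the additivity $\mathop\mathrm{tr}(\bm{B}\oplus\bm{C})=\mathop\mathrm{tr}\bm{B}\oplus\mathop\mathrm{tr}\bm{C}$ yields $\bigoplus_{k=1}^{r}\mathop\mathrm{tr}\bm{A}^{k}=\mathop\mathrm{tr}(\bm{A}\bm{A}^{\ast})$, again independent of $r\geq n$. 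Specializing to $r=n$ identifies this common value with $\bigoplus_{k=1}^{n}\mathop\mathrm{tr}\bm{A}^{k}=\mathop\mathrm{Tr}(\bm{A})$, which is the second equality.

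I do not expect a genuine obstacle: the corollary is a short bookkeeping consequence of Proposition~\ref{p:klstrace}. The one pitfall to avoid is bounding the higher traces directly, since the easy bound obtained that way is only $\mathop\mathrm{tr}\bm{A}^{k}\leq\mathop\mathrm{tr}\bm{A}^{\ast}=\mathbb{1}$ (using $\bm{A}^{k}\leq\bm{A}^{\ast}$ and the fact that in $\bm{A}^{\ast}=\bigoplus_{k=0}^{n-1}\bm{A}^{k}$ the term with $k=0$ contributes $\mathbb{1}$ while the remaining terms contribute $\mathop\mathrm{Tr}(\bm{A})\leq\mathbb{1}$); this is strictly weaker than $\mathop\mathrm{tr}\bm{A}^{k}\leq\mathop\mathrm{Tr}(\bm{A})$ whenever $\mathop\mathrm{Tr}(\bm{A})<\mathbb{1}$, hence insufficient. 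Deriving the trace identity from the already-established identity for $\bm{A}^{\ast}$, as above, sidesteps this; alternatively one could argue combinatorially in the weighted digraph of $\bm{A}$ by decomposing a closed walk of length $k$ into elementary cycles, each of length at most $n$ and hence of weight at most $\mathop\mathrm{Tr}(\bm{A})\leq\mathbb{1}$, and then multiplying, but the algebraic route is cleaner.
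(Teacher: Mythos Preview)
Your proposal is correct and follows essentially the same route as the paper: first use $\bm{A}^{k}\leq\bm{A}^{\ast}$ (from Proposition~\ref{p:klstrace}) to squeeze $\bigoplus_{k=0}^{r-1}\bm{A}^{k}$ between $\bigoplus_{k=0}^{n-1}\bm{A}^{k}$ and $\bm{A}^{\ast}$, then multiply by $\bm{A}$, take traces, and identify the result with $\mathop\mathrm{Tr}(\bm{A})$ via the case $r=n$. The paper compresses this into a one-line chain $\bigoplus_{k=1}^{r}\mathop\mathrm{tr}\bm{A}^{k}=\mathop\mathrm{tr}\bigl(\bm{A}\bigoplus_{k=0}^{r-1}\bm{A}^{k}\bigr)=\mathop\mathrm{tr}(\bm{A}\bm{A}^{\ast})=\mathop\mathrm{Tr}(\bm{A})$, but the content is identical.
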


To verify the statement, assume that the condition $\mathop\mathrm{Tr}(\bm{A})\leq\mathbb{1}$ is valid. It follows from Proposition~\ref{p:klstrace} that the inequality
\begin{equation*}
\bm{A}^{r}
\leq\bm{A}^{\ast}
\end{equation*}
holds for all integer $r\geq0$, which yields the first equality.

The second equality is obtained as follows:
\begin{equation*}
\bigoplus_{k=1}^{r}\mathop\mathrm{tr}\bm{A}^{k}
=
\mathop\mathrm{tr}\left(\bm{A}\bigoplus_{k=0}^{r-1}\bm{A}^{k}\right)
=
\mathop\mathrm{tr}(\bm{A}\bm{A}^{\ast})
=
\mathop\mathrm{Tr}(\bm{A}).
\end{equation*}

The notion of the trace function is closely related to the concept of the max-plus algebra spectrum. The max-plus algebra spectral radius of a square matrix $\bm{A}\in\mathbb{X}^{n\times n}$ is given by
\begin{equation*}
\rho(\bm{A})
=
\bigoplus_{k=1}^{n}
\mathop\mathrm{tr}\nolimits^{1/k}(\bm{A}^{k}).
\end{equation*}

This is the largest tropical eigenvalue of $\bm{A}$ (assuming that the order relation $a\leq b$ is given by $a\oplus b=b$), i.\,e., the largest $\lambda$ such that there exists a nonzero vector $\bm{x}\in\mathbb{X}^{n}$ to satisfy the equality $\bm{A}\bm{x}=\lambda\bm{x}$.

It is easy to see that $\mathop\mathrm{Tr}(\bm{A})\leq\mathbb{1}$ if and only if $\rho(\bm{A})\leq\mathbb{1}$.

For any matrices $\bm{A}$ and $\bm{B}$ of appropriate size, the following equality holds:
\begin{equation*}
\rho(\bm{A}\bm{B})
=
\rho(\bm{B}\bm{A}).
\end{equation*}

To verify this equality, it is sufficient to note that each eigenvalue of the matrix $\bm{A}\bm{B}$ is an eigenvalue of $\bm{B}\bm{A}$ and vice versa. Therefore, both the matrices $\bm{A}\bm{B}$ and $\bm{B}\bm{A}$ have a common spectrum, and thus their spectral radii coincide.

\subsection{Vector inequalities}

Given a matrix $\bm{A}\in\mathbb{X}^{m\times n}$ and vector $\bm{d}\in\mathbb{X}^{m}$, consider the problem to find vectors $\bm{x}\in\mathbb{X}^{n}$ to satisfy the inequality
\begin{equation}
\bm{A}\bm{x}
\leq
\bm{d}
\label{I-Axleqd}
\end{equation}

Solutions of the problem are known under various assumptions and in different forms (see, e.\,g., \cite{Baccelli1993Synchronization,Cuninghamegreen1994Minimax}). Below, we use the solution \cite{Krivulin2015Extremal} offered by the next statement.
\begin{proposition}
\label{P-Axleqd}
For any column-regular matrix $\bm{A}$ and regular vector $\bm{d}$, all solutions to inequality \eqref{I-Axleqd} are given by
\begin{equation*}
\bm{x}
\leq
(\bm{d}^{-}\bm{A})^{-}.
\end{equation*}
\end{proposition}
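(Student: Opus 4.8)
The plan is to prove the statement by two complementary inclusions, exploiting the order-theoretic and residuation-like properties of the idempotent semifield listed in the preliminaries. First I would unpack what the componentwise inequality $\bm{A}\bm{x}\leq\bm{d}$ actually says: for every row index $i$, we need $\bigoplus_{j} a_{ij}x_{j}\leq d_{i}$, which, by the property that $a\oplus b\leq c$ is equivalent to the pair $a\leq c$ and $b\leq c$, is the same as requiring $a_{ij}x_{j}\leq d_{i}$ for all $i$ and all $j$. Fixing a column index $j$, this is a conjunction over $i$ of scalar inequalities. Using invertibility of multiplication and the antitonicity of inversion, $a_{ij}x_{j}\leq d_{i}$ with $a_{ij}\neq\mathbb{0}$ rearranges to $x_{j}\leq a_{ij}^{-1}d_{i}$; when $a_{ij}=\mathbb{0}$ the inequality $a_{ij}x_{j}=\mathbb{0}\leq d_{i}$ holds automatically (here regularity of $\bm{d}$ guarantees $d_{i}\neq\mathbb{0}$, so this is genuinely vacuous rather than forcing anything), which is consistent with the convention $a_{ij}^{-}=\mathbb{0}$ absorbing into the maximum.

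Next I would assemble these scalar bounds. Column-regularity of $\bm{A}$ ensures that for each $j$ there is at least one $i$ with $a_{ij}\neq\mathbb{0}$, so the set of active constraints on $x_{j}$ is nonempty and the tightest of them is the binding one. Since the canonical order is total and finite maxima exist, the conjunction $\bigl(x_{j}\leq a_{ij}^{-1}d_{i}\ \text{for all }i\bigr)$ is equivalent to the single inequality $x_{j}\leq\bigoplus_{i} a_{ij}^{-} d_{i}$, again by the equivalence of $a\oplus b\leq c$ with $a\leq c$ and $b\leq c$ applied in the reverse direction. It then remains to recognize the right-hand side: by the definitions of the conjugate and of matrix–vector multiplication, $\bigoplus_{i} a_{ij}^{-} d_{i} = \bigoplus_{i} d_{i} a_{ij}^{-} = (\bm{d}^{-}\bm{A})_{j}^{-}$, since $(\bm{d}^{-}\bm{A})_{j} = \bigoplus_{i} d_{i}^{-1} a_{ij}$ and conjugating a scalar inverts it. Collecting over $j$ gives exactly $\bm{x}\leq(\bm{d}^{-}\bm{A})^{-}$.

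Both directions actually fall out of the same chain of equivalences, so strictly speaking I would present it as a single \emph{if and only if} argument rather than two separate inclusions: every step above — distributing the maximum out of the row product, splitting the $\oplus\leq$, rearranging each scalar inequality by invertibility, and recombining — is an equivalence, not merely an implication, given the monotonicity of $\otimes$ and the antitonicity of ${}^{-1}$. The mild subtlety worth stating carefully is the handling of zero entries of $\bm{A}$: one must check that dropping the terms with $a_{ij}=\mathbb{0}$ from the maximum changes nothing, which is where regularity of $\bm{d}$ and the convention $\mathbb{0}\otimes a=\mathbb{0}$ combine cleanly, and that column-regularity is precisely what prevents the bound from degenerating (an all-zero column would make $(\bm{d}^{-}\bm{A})_j=\mathbb{0}$, whose conjugate is again $\mathbb{0}$, forcing $x_j\leq\mathbb{0}$, i.e.\ $x_j=\mathbb{0}$). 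I do not expect any real obstacle here; the only thing requiring care is bookkeeping with the $\mathbb{0}$-entries and making sure each transformation is stated as a genuine equivalence so that the solution set is captured exactly, not just over- or under-approximated.
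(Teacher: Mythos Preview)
Your overall strategy is sound and is indeed the standard residuation argument (the paper itself does not prove this proposition but only cites it), yet there is a genuine slip in the middle of your chain of equivalences that you should repair.

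You write that the conjunction $\bigl(x_{j}\leq a_{ij}^{-1}d_{i}\ \text{for all }i\bigr)$ is equivalent to $x_{j}\leq\bigoplus_{i} a_{ij}^{-} d_{i}$, invoking ``$a\oplus b\leq c\iff a\leq c\text{ and }b\leq c$ in the reverse direction.'' But that rule lets you collapse a sum on the \emph{left} of $\leq$, not a family of upper bounds on the right. Since $\oplus$ is maximum, $\bigoplus_{i} a_{ij}^{-1}d_{i}$ is the \emph{weakest} of the individual bounds, so $x_{j}\leq\bigoplus_{i} a_{ij}^{-1}d_{i}$ is strictly weaker than the conjunction. Likewise, the identification $\bigoplus_{i} a_{ij}^{-1}d_{i}=((\bm{d}^{-}\bm{A})_{j})^{-1}$ is false: inversion is antitone, so $(\bigoplus_{i}c_{i})^{-1}$ equals the \emph{minimum} of the $c_{i}^{-1}$, not their $\oplus$-sum. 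Your two errors cancel, which is why you land on the correct final inequality, but the argument as written is not valid.

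The fix is to invert first so that the $\oplus$ ends up on the left of $\leq$. From $a_{ij}x_{j}\leq d_{i}$ with $a_{ij}\ne\mathbb{0}$ and $d_{i}\ne\mathbb{0}$, multiply by $d_{i}^{-1}$ and by $x_{j}^{-1}$ (monotonicity of $\otimes$) to get the equivalent $d_{i}^{-1}a_{ij}\leq x_{j}^{-1}$; terms with $a_{ij}=\mathbb{0}$ give $\mathbb{0}\leq x_{j}^{-1}$, which is vacuous. Now the rule $a\oplus b\leq c\iff a\leq c\text{ and }b\leq c$ applies legitimately:
\[
\bigoplus_{i} d_{i}^{-1}a_{ij}\;\leq\; x_{j}^{-1},
\qquad\text{i.e.}\qquad
(\bm{d}^{-}\bm{A})_{j}\;\leq\; x_{j}^{-1}.
\]
Column-regularity of $\bm{A}$ together with regularity of $\bm{d}$ makes $(\bm{d}^{-}\bm{A})_{j}\ne\mathbb{0}$, so one more antitone inversion yields $x_{j}\leq((\bm{d}^{-}\bm{A})_{j})^{-1}=((\bm{d}^{-}\bm{A})^{-})_{j}$, and every step is an equivalence. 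With this correction your proof goes through.
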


Given a matrix $\bm{A}\in\mathbb{X}^{n\times n}$ and vectors $\bm{b},\bm{d}\in\mathbb{X}^{n}$, consider the problem of finding regular vectors $\bm{x}\in\mathbb{X}^{n}$ that satisfy the double inequality
\begin{equation}
\bm{A}\bm{x}\oplus\bm{b}
\leq
\bm{x}
\leq
\bm{d}
\label{I-Axbleqxleqd}
\end{equation}

The next statement offers a complete solution to the problem \cite{Krivulin2014Constrained}.
\begin{proposition}
\label{P-Axbleqxleqd}
For any matrix $\bm{A}$, vector $\bm{b}$ and regular vector $\bm{d}$, denote $\Delta=\mathop\mathrm{Tr}(\bm{A})\oplus\bm{d}^{-}\bm{A}^{\ast}\bm{b}$. Then, the following statements hold.
\begin{itemize}
\item[(i)]
If $\Delta\leq\mathbb{1}$, then all regular solutions of inequality \eqref{I-Axbleqxleqd} are given by
\begin{equation*}
\bm{x}
=
\bm{A}^{\ast}\bm{u},
\quad
\bm{b}\leq\bm{u}\leq(\bm{d}^{-}\bm{A}^{\ast})^{-}.
\end{equation*}
\item[(ii)]
If $\Delta>\mathbb{1}$, then there is no regular solution.
\end{itemize}
\end{proposition}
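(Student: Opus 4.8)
The plan is to analyze the two inequalities $\bm{A}\bm{x}\oplus\bm{b}\leq\bm{x}$ and $\bm{x}\leq\bm{d}$ separately, obtain a parametrized description of the solutions of the first one, and then intersect with the box constraint $\bm{x}\leq\bm{d}$. First I would treat the lower inequality $\bm{A}\bm{x}\oplus\bm{b}\leq\bm{x}$, equivalently the pair $\bm{A}\bm{x}\leq\bm{x}$ and $\bm{b}\leq\bm{x}$. Iterating $\bm{x}\geq\bm{A}\bm{x}$ gives $\bm{x}\geq\bm{A}^{k}\bm{x}$ for all $k\geq0$, and combining with $\bm{x}\geq\bm{b}$ yields $\bm{x}\geq\bm{A}^{k}\bm{b}$ for all $k\geq0$, hence $\bm{x}\geq\bm{A}^{\ast}\bm{b}$ whenever the Kleene star converges. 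Conversely, any vector of the form $\bm{x}=\bm{A}^{\ast}\bm{u}$ with $\bm{u}\geq\bm{b}$ satisfies $\bm{A}\bm{x}=\bm{A}\bm{A}^{\ast}\bm{u}\leq\bm{A}^{\ast}\bm{u}=\bm{x}$ (using $\bm{A}\bm{A}^{\ast}\leq\bm{A}^{\ast}$, which follows from $\bm{A}^{\ast}=\bigoplus_{k\geq0}\bm{A}^{k}$) and $\bm{x}=\bm{A}^{\ast}\bm{u}\geq\bm{u}\geq\bm{b}$ (using $\bm{A}^{\ast}\geq\bm{A}^{0}=\bm{I}$). So, modulo convergence of $\bm{A}^{\ast}$, the solution set of the lower inequality alone is exactly $\{\bm{A}^{\ast}\bm{u}:\bm{u}\geq\bm{b}\}$.

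Next I would impose $\bm{x}\leq\bm{d}$. Writing $\bm{x}=\bm{A}^{\ast}\bm{u}$, the constraint becomes $\bm{A}^{\ast}\bm{u}\leq\bm{d}$, a linear inequality in $\bm{u}$ of the form handled by Proposition~\ref{P-Axleqd}. Provided $\bm{A}^{\ast}$ is column-regular and $\bm{d}$ is regular, that proposition gives $\bm{u}\leq(\bm{d}^{-}\bm{A}^{\ast})^{-}$; note $\bm{A}^{\ast}$ is automatically column-regular since it dominates $\bm{I}$. Combining with the lower bound $\bm{u}\geq\bm{b}$ from the first part yields the claimed description $\bm{x}=\bm{A}^{\ast}\bm{u}$, $\bm{b}\leq\bm{u}\leq(\bm{d}^{-}\bm{A}^{\ast})^{-}$. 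One subtlety here: different $\bm{u}$'s can give the same $\bm{x}$, but the statement only claims that this family parametrizes all regular solutions, not that the parametrization is injective, so this is not an issue — I would just check that every such $\bm{x}$ is regular (it dominates $\bm{u}\geq\bm{b}$, and regularity will come out of the feasibility analysis below) and conversely that every regular solution is captured (take $\bm{u}=\bm{x}$ itself, which lies between $\bm{b}$ and the upper bound).

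The remaining and, I expect, main point is to show that the dividing line between cases (i) and (ii) is exactly the scalar $\Delta=\mathop\mathrm{Tr}(\bm{A})\oplus\bm{d}^{-}\bm{A}^{\ast}\bm{b}\leq\mathbb{1}$. The condition $\mathop\mathrm{Tr}(\bm{A})\leq\mathbb{1}$ is, by Proposition~\ref{p:klstrace}, precisely what makes $\bm{A}^{\ast}$ converge (and equal $\bigoplus_{k=0}^{n-1}\bm{A}^{k}$), so this part of $\Delta$ governs whether the machinery above is even applicable. The condition $\bm{d}^{-}\bm{A}^{\ast}\bm{b}\leq\mathbb{1}$ is the consistency condition for the interval $[\bm{b},(\bm{d}^{-}\bm{A}^{\ast})^{-}]$ to be nonempty: indeed $\bm{b}\leq(\bm{d}^{-}\bm{A}^{\ast})^{-}$ is equivalent, by Proposition~\ref{P-Axleqd} applied in the form $\bm{d}^{-}\bm{A}^{\ast}\bm{b}\leq\mathbb{1}$ (viewing $\bm{d}^{-}\bm{A}^{\ast}$ as a $1\times n$ matrix and $\mathbb{1}$ as the right-hand side), to $\bm{d}^{-}\bm{A}^{\ast}\bm{b}\leq\mathbb{1}$. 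Thus when $\Delta\leq\mathbb{1}$ both requirements hold, the parameter set for $\bm{u}$ is a nonempty interval of regular vectors, every resulting $\bm{x}=\bm{A}^{\ast}\bm{u}$ is a regular solution, and conversely every regular solution arises this way; when $\Delta>\mathbb{1}$, either $\bm{A}^{\ast}$ diverges, forcing $\bm{A}^{k}\bm{b}\to\infty$ along the lower bound and ruling out any regular $\bm{x}$, or the interval for $\bm{u}$ is empty, again leaving no solution. I would carry out the $\Delta>\mathbb{1}$ direction by contradiction: a regular solution $\bm{x}$ would satisfy $\bm{x}\geq\bm{A}^{\ast}\bm{b}$ and $\bm{x}\leq\bm{d}$, hence $\bm{d}^{-}\bm{x}\leq\mathbb{1}$ and $\bm{d}^{-}\bm{x}\geq\bm{d}^{-}\bm{A}^{\ast}\bm{b}$, as well as $\mathop\mathrm{tr}\bm{A}^{k}\leq\mathbb{1}$ for each $k$ (since $\bm{A}^{k}\bm{x}\leq\bm{x}$ forces every diagonal entry of $\bm{A}^{k}$ to be $\leq\mathbb{1}$), together giving $\Delta\leq\mathbb{1}$, a contradiction.
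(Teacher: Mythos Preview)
The paper does not actually prove Proposition~\ref{P-Axbleqxleqd}; it is quoted without proof from the reference \cite{Krivulin2014Constrained}. So there is no in-paper argument to compare against.

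Your proposal is correct and is essentially the standard argument found in that reference: parametrize the solutions of $\bm{A}\bm{x}\oplus\bm{b}\leq\bm{x}$ as $\bm{x}=\bm{A}^{\ast}\bm{u}$ with $\bm{u}\geq\bm{b}$ (using $\bm{A}\bm{A}^{\ast}\leq\bm{A}^{\ast}$ and $\bm{A}^{\ast}\geq\bm{I}$ for sufficiency, and $\bm{u}=\bm{x}$ with $\bm{A}^{\ast}\bm{x}=\bm{x}$ for necessity), then apply Proposition~\ref{P-Axleqd} to the column-regular matrix $\bm{A}^{\ast}$ to translate $\bm{x}\leq\bm{d}$ into $\bm{u}\leq(\bm{d}^{-}\bm{A}^{\ast})^{-}$. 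The feasibility analysis is also right: the key step in the $\Delta>\mathbb{1}$ direction is your observation that a regular $\bm{x}$ with $\bm{A}^{k}\bm{x}\leq\bm{x}$ forces every diagonal entry of $\bm{A}^{k}$ to be $\leq\mathbb{1}$, whence $\mathop\mathrm{Tr}(\bm{A})\leq\mathbb{1}$, after which $\bm{A}^{\ast}\bm{b}\leq\bm{x}\leq\bm{d}$ gives $\bm{d}^{-}\bm{A}^{\ast}\bm{b}\leq\mathbb{1}$. One cosmetic remark: your informal phrase ``$\bm{A}^{\ast}$ diverges, forcing $\bm{A}^{k}\bm{b}\to\infty$'' is not literally true for arbitrary $\bm{b}$, but you correctly replace it with the clean contradiction argument, so nothing needs fixing.
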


\subsection{Binomial identities for traces}
\label{ss:traces}

We now present identities for powers of matrix binomials and their traces, which are used in algebraic manipulations in what follows. First note that, for any square matrices $\bm{A},\bm{B}\in\mathbb{X}^{n\times n}$ and positive integer $k$, the following equality is valid (see also \cite{Krivulin2014Constrained,Krivulin2015Extremal,Krivulin2015Multidimensional,Krivulin2017Direct}):
\begin{equation*}
(\bm{A}\oplus\bm{B})^{k}
=
\bigoplus_{l=1}^{k}
\bigoplus_{i_{0}+i_{1}+\cdots+i_{l}=k-l}
\bm{B}^{i_{0}}(\bm{A}\bm{B}^{i_{1}}\cdots\bm{A}\bm{B}^{i_{l}})
\oplus
\bm{B}^{k}.
\end{equation*}

By summing over all $k=1,\ldots,p$, where $p\geq1$, and rearranging terms, we obtain
\begin{equation}
\bigoplus_{k=1}^{p}
(\bm{A}\oplus\bm{B})^{k}
=
\bigoplus_{k=1}^{p}
\bigoplus_{0\leq i_{0}+i_{1}+\cdots+i_{k}\leq p-k}
\bm{B}^{i_{0}}(\bm{A}\bm{B}^{i_{1}}\cdots\bm{A}\bm{B}^{i_{k}})
\oplus
\bigoplus_{k=1}^{p}
\bm{B}^{k}.
\label{E-oplus1pABk}
\end{equation}

Similarly, summing over all $k=0,\ldots,n-1$ yields
\begin{equation*}
(\bm{A}\oplus\bm{B})^{\ast}
=
\bigoplus_{k=0}^{n-1}
\bigoplus_{0\leq i_{0}+i_{1}+\cdots+i_{k}\leq n-k-1}
\bm{B}^{i_{0}}(\bm{A}\bm{B}^{i_{1}}\cdots\bm{A}\bm{B}^{i_{k}})
\oplus
\bm{B}^{\ast}.
\end{equation*}

After taking trace of both sides of equality \eqref{E-oplus1pABk}, we have the identity
\begin{multline}
\bigoplus_{k=1}^{p}
\mathop\mathrm{tr}(\bm{A}\oplus\bm{B})^{k}
=
\bigoplus_{k=1}^{p}
\bigoplus_{0\leq i_{0}+i_{1}+\cdots+i_{k}\leq p-k}
\mathop\mathrm{tr}(\bm{B}^{i_{0}}(\bm{A}\bm{B}^{i_{1}}\cdots\bm{A}\bm{B}^{i_{k}}))
\oplus
\bigoplus_{k=1}^{p}
\mathop\mathrm{tr}\bm{B}^{k}.
\label{E-oplus1ptrABk}
\end{multline}

Specifically, if $p=n$, then the identity takes the form
\begin{equation*}
\mathop\mathrm{Tr}(\bm{A}\oplus\bm{B})
=
\bigoplus_{k=1}^{n}
\bigoplus_{0\leq i_{0}+i_{1}+\cdots+i_{k}\leq n-k}
\mathop\mathrm{tr}(\bm{B}^{i_{0}}(\bm{A}\bm{B}^{i_{1}}\cdots\bm{A}\bm{B}^{i_{k}}))
\oplus
\mathop\mathrm{Tr}(\bm{B}).
\end{equation*}

We conclude this section with the estimation of the computational complexity associated with the calculation of the sum at \eqref{E-oplus1pABk} (see also \cite{Krivulin2017Direct,Krivulin2017Tropical}). First, we introduce the notation
\begin{equation*}
\bm{T}_{kl}
=
\bigoplus_{0\leq i_{0}+i_{1}+\cdots+i_{k}\leq l}
\bm{B}^{i_{0}}(\bm{A}\bm{B}^{i_{1}}\cdots\bm{A}\bm{B}^{i_{k}}),
\qquad
k,l\geq0,
\end{equation*}
and note that
\begin{equation*}
\bigoplus_{k=1}^{p}
(\bm{A}\oplus\bm{B})^{k}
=
\bigoplus_{k=1}^{p}
\bm{T}_{k,p-k}\oplus\bigoplus_{k=1}^p \bm{B}^k.
\end{equation*}

We prove the following statement.
\begin{proposition}
\label{P:recurrence}
The matrices $\bm{T}_{kl}$ satisfy the recurrence relations
\begin{equation*}
\bm{T}_{kl}
=
\bm{A}\bm{T}_{k-1,l}\oplus\bm{B}\bm{T}_{k,l-1},
\quad
\bm{T}_{k0}
=
\bm{A}^{k},
\quad
\bm{T}_{0l}
=
\bm{B}^{l},
\quad
\bm{T}_{00}
=
\bm{I},
\qquad
k,l\geq0.
\end{equation*}
\end{proposition}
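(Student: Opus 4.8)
The plan is to verify the recurrence by unwinding the definition of $\bm{T}_{kl}$ and splitting the index set according to the value of the first exponent $i_{0}$. Recall that
\begin{equation*}
\bm{T}_{kl}
=
\bigoplus_{0\leq i_{0}+i_{1}+\cdots+i_{k}\leq l}
\bm{B}^{i_{0}}(\bm{A}\bm{B}^{i_{1}}\cdots\bm{A}\bm{B}^{i_{k}}).
\end{equation*}
The base cases are immediate: when $l=0$ every $i_{j}$ must be zero, so the only term is $\bm{B}^{0}(\bm{A}\bm{B}^{0}\cdots\bm{A}\bm{B}^{0})=\bm{A}^{k}$, giving $\bm{T}_{k0}=\bm{A}^{k}$; when $k=0$ the sum reduces to $\bigoplus_{0\leq i_{0}\leq l}\bm{B}^{i_{0}}=\bm{B}^{l}$ (using that $\bm{B}^{l}$ dominates lower powers under the Kleene-star-type convention here, or simply taking it as the defining expression), and $\bm{T}_{00}=\bm{I}$ follows as the empty product.

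For the main recurrence I would treat the two summands $\bm{A}\bm{T}_{k-1,l}$ and $\bm{B}\bm{T}_{k,l-1}$ separately and show their tropical sum equals $\bm{T}_{kl}$. Factoring $\bm{A}$ out on the left of $\bm{T}_{k-1,l}$ produces
\begin{equation*}
\bm{A}\bm{T}_{k-1,l}
=
\bigoplus_{0\leq i_{1}+\cdots+i_{k}\leq l}
\bm{A}\bm{B}^{i_{1}}(\bm{A}\bm{B}^{i_{2}}\cdots\bm{A}\bm{B}^{i_{k}}),
\end{equation*}
which is exactly the part of the sum defining $\bm{T}_{kl}$ in which $i_{0}=0$ (relabel the $k-1$ indices of $\bm{T}_{k-1,l}$ as $i_{1},\ldots,i_{k}$). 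Likewise,
\begin{equation*}
\bm{B}\bm{T}_{k,l-1}
=
\bigoplus_{0\leq i_{0}+i_{1}+\cdots+i_{k}\leq l-1}
\bm{B}^{i_{0}+1}(\bm{A}\bm{B}^{i_{1}}\cdots\bm{A}\bm{B}^{i_{k}}),
\end{equation*}
and substituting $i_{0}'=i_{0}+1$ shows this is precisely the part of $\bm{T}_{kl}$ with $i_{0}\geq1$ (the constraint $0\leq i_{0}+\cdots+i_{k}\leq l-1$ becomes $1\leq i_{0}'+i_{1}+\cdots+i_{k}\leq l$). Taking the tropical sum of the two pieces — the $i_{0}=0$ part and the $i_{0}\geq1$ part — recovers the full index set $0\leq i_{0}+i_{1}+\cdots+i_{k}\leq l$, hence $\bm{T}_{kl}=\bm{A}\bm{T}_{k-1,l}\oplus\bm{B}\bm{T}_{k,l-1}$.

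The only point that needs a little care — and the likely main obstacle — is the bookkeeping with the summation ranges in degenerate cases, specifically making sure the index-shift $i_{0}\mapsto i_{0}+1$ matches the constraint bounds exactly and that the split into $i_{0}=0$ and $i_{0}\geq1$ is exhaustive and disjoint even when $k=0$ or $l=0$; one should check the recurrence is consistent with the stated base cases (e.g. $\bm{T}_{k0}=\bm{A}\bm{T}_{k-1,0}\oplus\bm{B}\bm{T}_{k,-1}$ must reduce to $\bm{A}\bm{A}^{k-1}=\bm{A}^{k}$, which forces the convention that $\bm{T}_{k,-1}=\bm{0}$, i.e. terms with negative second index are empty). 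Beyond that, the argument is a routine reindexing in the idempotent semiring, using only associativity and distributivity of $\otimes$ over $\oplus$ together with the fact that $\oplus$ over a union of index sets splits as the $\oplus$ of the partial sums.
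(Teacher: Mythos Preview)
Your argument for the main recurrence is essentially identical to the paper's: both identify $\bm{A}\bm{T}_{k-1,l}$ and $\bm{B}\bm{T}_{k,l-1}$ as the $i_{0}=0$ and $i_{0}\geq 1$ portions of the defining sum and then combine them. Your hesitation about the base case $\bm{T}_{0l}=\bm{B}^{l}$ is in fact justified (the sum definition literally gives $\bigoplus_{i=0}^{l}\bm{B}^{i}$, and $\bm{B}^{l}$ does not dominate lower powers in general), but the paper simply declares the base cases obvious and this does not affect the recurrence itself.
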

\begin{proof}
To verify the first relation (the others are obvious), we write
\begin{multline*}
\bm{A}\bm{T}_{k-1,l}
=
\bigoplus_{0\leq i_{0}+i_{1}+\cdots+i_{k-1}\leq l}
\bm{B}^{0}\bm{A}\bm{B}^{i_{0}}(\bm{A}\bm{B}^{i_{1}}\cdots\bm{A}\bm{B}^{i_{k-1}})
\\=
\bigoplus_{0\leq i_{1}+\cdots+i_{k}\leq l}
\bm{B}^{0}(\bm{A}\bm{B}^{i_{1}}\cdots\bm{A}\bm{B}^{i_{k}}).
\end{multline*}

Furthermore, we have
\begin{multline*}
\bm{B}\bm{T}_{k,l-1}
=
\bigoplus_{0\leq i_{0}+i_{1}+\cdots+i_{k}\leq l-1}
\bm{B}^{i_{0}+1}(\bm{A}\bm{B}^{i_{1}}\cdots\bm{A}\bm{B}^{i_{k}})
\\=
\bigoplus_{0\leq (i_{0}+1)+i_{1}+\cdots+i_{k}\leq l}
\bm{B}^{i_{0}+1}(\bm{A}\bm{B}^{i_{1}}\cdots\bm{A}\bm{B}^{i_{k}}).
\end{multline*}

By combining both expressions, we obtain the desired recurrence relation.
\qed
\end{proof}

\begin{corollary}
\label{C:complexity}
The calculation of sums \eqref{E-oplus1pABk} and \eqref{E-oplus1ptrABk} involves $O(n^{3}p^{2})$ scalar operations.
\end{corollary}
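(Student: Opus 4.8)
The plan is to use the recurrence relations of Proposition~\ref{P:recurrence} to organize the computation of the matrices $\bm{T}_{kl}$ in a dynamic-programming fashion, and then count the matrix operations carefully. First I would observe that in order to evaluate $\bigoplus_{k=1}^{p}(\bm{A}\oplus\bm{B})^{k} = \bigoplus_{k=1}^{p}\bm{T}_{k,p-k}\oplus\bigoplus_{k=1}^{p}\bm{B}^{k}$, it suffices to have all the matrices $\bm{T}_{kl}$ with $k\geq 1$, $l\geq 0$ and $k+l\leq p$ (together with the powers $\bm{B}^{1},\ldots,\bm{B}^{p}$, which are the $\bm{T}_{0l}$ and are produced along the way). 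The key point is that the recurrence $\bm{T}_{kl}=\bm{A}\bm{T}_{k-1,l}\oplus\bm{B}\bm{T}_{k,l-1}$ expresses each such matrix in terms of two previously computed matrices using exactly two $n\times n$ matrix multiplications and one $n\times n$ matrix addition. One matrix multiplication over an idempotent semifield costs $O(n^{3})$ scalar operations, and a matrix addition costs $O(n^{2})$, so each $\bm{T}_{kl}$ is obtained in $O(n^{3})$ scalar operations once its predecessors are known.

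Next I would count the number of index pairs $(k,l)$ that need to be processed. These are the lattice points in the triangle $k\geq 0$, $l\geq 0$, $k+l\leq p$, of which there are $\binom{p+2}{2}=O(p^{2})$. Hence the total work to fill in the whole table $\{\bm{T}_{kl}\}$ is $O(p^{2})$ times $O(n^{3})$, that is $O(n^{3}p^{2})$ scalar operations. Finally, summing up the relevant $\bm{T}_{k,p-k}$ and $\bm{B}^{k}$ to form $\bigoplus_{k=1}^{p}(\bm{A}\oplus\bm{B})^{k}$ requires $O(p)$ further matrix additions, i.e.\ $O(n^{2}p)$ operations, which is dominated by $O(n^{3}p^{2})$. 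This establishes the bound for the sum at \eqref{E-oplus1pABk}.

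For the trace sum \eqref{E-oplus1ptrABk}, I would note that by the trace identities $\mathop\mathrm{tr}(\bm{A}\oplus\bm{B})=\mathop\mathrm{tr}\bm{A}\oplus\mathop\mathrm{tr}\bm{B}$ and $\mathop\mathrm{tr}(\bm{A}\bm{C})=\mathop\mathrm{tr}(\bm{C}\bm{A})$, the left-hand side of \eqref{E-oplus1ptrABk} equals $\mathop\mathrm{tr}\big(\bigoplus_{k=1}^{p}(\bm{A}\oplus\bm{B})^{k}\big)$, so once the matrix $\bigoplus_{k=1}^{p}(\bm{A}\oplus\bm{B})^{k}$ has been computed within $O(n^{3}p^{2})$ operations, taking its trace costs only $O(n)$ more. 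Thus the complexity bound for \eqref{E-oplus1ptrABk} follows from that for \eqref{E-oplus1pABk}.

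The only mildly delicate point — and the place I would be most careful — is making sure that the dynamic-programming order is legitimate, namely that when $\bm{T}_{kl}$ is computed, both $\bm{T}_{k-1,l}$ and $\bm{T}_{k,l-1}$ have already been computed; this is handled by processing the triangle in order of increasing $k+l$ (or, equivalently, increasing $k$ for fixed total, etc.), with the base cases $\bm{T}_{k0}=\bm{A}^{k}$, $\bm{T}_{0l}=\bm{B}^{l}$ and $\bm{T}_{00}=\bm{I}$ supplied by Proposition~\ref{P:recurrence}. Everything else is a routine accounting of $O(p^{2})$ table entries at $O(n^{3})$ cost each; I do not anticipate any real obstacle beyond stating this bookkeeping cleanly.
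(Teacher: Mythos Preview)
Your proposal is correct and follows essentially the same approach as the paper: use the recurrence of Proposition~\ref{P:recurrence} to compute each $\bm{T}_{kl}$ at cost $O(n^{3})$, count $O(p^{2})$ such matrices in the triangle $k+l\leq p$, and observe that the trace sum comes for free once the matrix sum is in hand. The paper's own proof is slightly terser (it counts $1+\cdots+p=p(p+1)/2$ matrices with $k\geq 1$ rather than your $\binom{p+2}{2}$, and does not spell out the dynamic-programming order), but the argument is the same.
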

\begin{proof}
It follows from the above recurrence relation, that each matrix $\bm{T}_{kl}$ can be calculated with one matrix addition and two matrix multiplications, which requires $O(n^{3})$ scalar operations. Observing that the number of matrices $\bm{T}_{kl}$ needed to evaluate the sum at \eqref{E-oplus1pABk} is $1+\cdots+p=p(p+1)/2$, the overall computational complexity is of order $O(n^{3}p^{2})$. Specifically, with $p=n$, we have the order $O(n^{5})$.

Finally note that the calculation of the sum of traces at \eqref{E-oplus1ptrABk} has the same order of complexity $O(n^{3}p^{2})$, which becomes $O(n^{5})$ if $p=n$.
\qed
\end{proof}

\subsection{Skew block diagonal matrices}

Given matrices $\bm{B}\in\mathbb{X}^{n\times m}$ and $\bm{C}\in\mathbb{X}^{m\times n}$, we consider a square matrix of order $m+n$ in the skew block diagonal form
\begin{equation*}
\bm{A}
=
\begin{pmatrix}
\bm{0} & \bm{B}
\\
\bm{C} & \bm{0}
\end{pmatrix}.
\end{equation*}

Here we are going to consider tropical trace functions and Kleene stars of skew block diagonal matrices. The results presented further extend the technique developed in \cite{Carre1971Algebra} to compute Kleene stars of block matrices.
\begin{proposition}
\label{R-TrA}
If $\mathop\mathrm{Tr}(\bm{A})\leq\mathbb{1}$, then the following equalities hold:
\begin{equation*}
\mathop\mathrm{Tr}(\bm{A})
=
\mathop\mathrm{Tr}(\bm{B}\bm{C})
=
\mathop\mathrm{Tr}(\bm{C}\bm{B})
=
\bigoplus_{k=1}^{\min(m,n)}\mathop\mathrm{tr}(\bm{B}\bm{C})^{k}
=
\bigoplus_{k=1}^{\min(m,n)}\mathop\mathrm{tr}(\bm{C}\bm{B})^{k}.
\end{equation*}
\end{proposition}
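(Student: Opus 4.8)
The plan is to compute the powers of $\bm{A}$ explicitly, read off its trace function, and then invoke Corollary~\ref{c:klstrace} to make the truncation index irrelevant. First I would check, by an easy induction starting from the block identity
\begin{equation*}
\bm{A}^{2}
=
\begin{pmatrix}
\bm{B}\bm{C} & \bm{0}
\\
\bm{0} & \bm{C}\bm{B}
\end{pmatrix},
\end{equation*}
that for all $k\geq0$
\begin{equation*}
\bm{A}^{2k}
=
\begin{pmatrix}
(\bm{B}\bm{C})^{k} & \bm{0}
\\
\bm{0} & (\bm{C}\bm{B})^{k}
\end{pmatrix},
\qquad
\bm{A}^{2k+1}
=
\begin{pmatrix}
\bm{0} & (\bm{B}\bm{C})^{k}\bm{B}
\\
(\bm{C}\bm{B})^{k}\bm{C} & \bm{0}
\end{pmatrix}.
\end{equation*}
Hence $\mathop\mathrm{tr}\bm{A}^{j}=\mathbb{0}$ for every odd $j$, while $\mathop\mathrm{tr}\bm{A}^{2k}=\mathop\mathrm{tr}(\bm{B}\bm{C})^{k}\oplus\mathop\mathrm{tr}(\bm{C}\bm{B})^{k}$; and the cyclic identity $\mathop\mathrm{tr}(\bm{X}\bm{Y})=\mathop\mathrm{tr}(\bm{Y}\bm{X})$ with $\bm{X}=\bm{B}(\bm{C}\bm{B})^{k-1}$ and $\bm{Y}=\bm{C}$ shows $\mathop\mathrm{tr}(\bm{B}\bm{C})^{k}=\mathop\mathrm{tr}(\bm{C}\bm{B})^{k}$ for all $k\geq1$. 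Summing $\mathop\mathrm{tr}\bm{A}^{j}$ over $j=1,\ldots,m+n$ and discarding the odd terms then gives
\begin{equation*}
\mathop\mathrm{Tr}(\bm{A})
=
\bigoplus_{k=1}^{\lfloor(m+n)/2\rfloor}\mathop\mathrm{tr}(\bm{B}\bm{C})^{k}
=
\bigoplus_{k=1}^{\lfloor(m+n)/2\rfloor}\mathop\mathrm{tr}(\bm{C}\bm{B})^{k}.
\end{equation*}

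Next I would reconcile this truncation index with the ones appearing in $\mathop\mathrm{Tr}(\bm{B}\bm{C})$ (a sum up to $n$, since $\bm{B}\bm{C}\in\mathbb{X}^{n\times n}$) and in $\mathop\mathrm{Tr}(\bm{C}\bm{B})$ (a sum up to $m$). Put $s=\min(m,n)$, let $\bm{D}$ be whichever of $\bm{B}\bm{C}$ and $\bm{C}\bm{B}$ has order $s$ and $\bm{E}$ the other one, so that $\mathop\mathrm{tr}\bm{D}^{k}=\mathop\mathrm{tr}(\bm{B}\bm{C})^{k}=\mathop\mathrm{tr}(\bm{C}\bm{B})^{k}$ for every $k$, and note that $s\leq\lfloor(m+n)/2\rfloor\leq\max(m,n)$. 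By definition $\mathop\mathrm{Tr}(\bm{D})=\bigoplus_{k=1}^{s}\mathop\mathrm{tr}\bm{D}^{k}$, which is dominated by $\bigoplus_{k=1}^{\lfloor(m+n)/2\rfloor}\mathop\mathrm{tr}\bm{D}^{k}=\mathop\mathrm{Tr}(\bm{A})\leq\mathbb{1}$. Applying Corollary~\ref{c:klstrace} to the order-$s$ matrix $\bm{D}$ then gives $\mathop\mathrm{Tr}(\bm{D})=\bigoplus_{k=1}^{r}\mathop\mathrm{tr}\bm{D}^{k}$ for every $r\geq s$; taking $r=\lfloor(m+n)/2\rfloor$ yields $\mathop\mathrm{Tr}(\bm{D})=\mathop\mathrm{Tr}(\bm{A})$, and taking $r=\max(m,n)$ yields $\mathop\mathrm{Tr}(\bm{D})=\mathop\mathrm{Tr}(\bm{E})$. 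This proves $\mathop\mathrm{Tr}(\bm{A})=\mathop\mathrm{Tr}(\bm{B}\bm{C})=\mathop\mathrm{Tr}(\bm{C}\bm{B})$, and since $\mathop\mathrm{Tr}(\bm{D})=\bigoplus_{k=1}^{s}\mathop\mathrm{tr}\bm{D}^{k}=\bigoplus_{k=1}^{\min(m,n)}\mathop\mathrm{tr}(\bm{B}\bm{C})^{k}=\bigoplus_{k=1}^{\min(m,n)}\mathop\mathrm{tr}(\bm{C}\bm{B})^{k}$, the two remaining equalities follow as well.

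I expect the real content of the argument — and the step to write out carefully — to be the bookkeeping of the summation limits. A priori the three quantities $\mathop\mathrm{Tr}(\bm{A})$, $\mathop\mathrm{Tr}(\bm{B}\bm{C})$, $\mathop\mathrm{Tr}(\bm{C}\bm{B})$ are truncated at $m+n$, $n$ and $m$ respectively (the even-power reduction only brings the first down to $\lfloor(m+n)/2\rfloor$), and these numbers need not agree, so the equalities genuinely rely on the hypothesis $\mathop\mathrm{Tr}(\bm{A})\leq\mathbb{1}$, which through Corollary~\ref{c:klstrace} is exactly what makes the truncation index immaterial; dropping the hypothesis, $\mathop\mathrm{tr}(\bm{B}\bm{C})^{k}$ may keep growing with $k$ and the claimed identities fail. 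A further point worth stressing is that the corollary must be applied to the product $\bm{D}$ of smaller order $s=\min(m,n)$, since only there is the bound $\mathop\mathrm{Tr}(\bm{D})\leq\mathbb{1}$ immediate from $\mathop\mathrm{Tr}(\bm{A})\leq\mathbb{1}$; by contrast, the block-power computation and the cyclic trace identity are routine.
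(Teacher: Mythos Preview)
Your proof is correct and follows essentially the same approach as the paper: compute the block powers of $\bm{A}$, use the cyclic trace identity to equate $\mathop\mathrm{tr}(\bm{B}\bm{C})^{k}$ and $\mathop\mathrm{tr}(\bm{C}\bm{B})^{k}$, and then invoke Corollary~\ref{c:klstrace} on the product of smaller order to reconcile the different truncation indices. The only difference is organizational: the paper splits into the cases $m\leq n$ and $m>n$ and extends the sum for $\mathop\mathrm{Tr}(\bm{A})$ to $2\max(m,n)$ terms, whereas you handle both cases at once via the index $\lfloor(m+n)/2\rfloor$ and the notation $\bm{D},\bm{E}$; this is a cosmetic variation of the same argument.
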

\begin{proof}
First assume that $m\leq n$. Since $\mathop\mathrm{Tr}(\bm{A})\leq\mathbb{1}$, the inequality $\bm{A}^{k}\leq\bm{A}^{\ast}$ holds for any integer $k\geq0$. Observing that $m+n\leq 2n$, we have
\begin{equation*}
\bm{A}^{\ast}
=
\bigoplus_{k=0}^{m+n-1}\bm{A}^{k}
=
\bigoplus_{k=0}^{2n-1}\bm{A}^{k},
\qquad
\bigoplus_{k=1}^{m+n}\bm{A}^{k}
=
\bm{A}\bm{A}^{\ast}
=
\bigoplus_{k=1}^{2n}\bm{A}^{k}.
\end{equation*}

Next, we calculate
\begin{equation*}
\mathop\mathrm{Tr}(\bm{A})
=
\bigoplus_{k=1}^{m+n}\mathop\mathrm{tr}\bm{A}^{k}
=
\bigoplus_{k=1}^{2n}\mathop\mathrm{tr}\bm{A}^{k}
=
\bigoplus_{k=1}^{n}\mathop\mathrm{tr}\bm{A}^{2k}.
\end{equation*}

For any $k=0,1,2,\ldots$, we obtain
\begin{equation*}
\bm{A}^{2k}
=
\begin{pmatrix}
(\bm{B}\bm{C})^{k} & \bm{0}
\\
\bm{0} & (\bm{C}\bm{B})^{k}
\end{pmatrix},
\qquad
\bm{A}^{2k+1}
=
\begin{pmatrix}
\bm{0} & \bm{B}(\bm{C}\bm{B})^{k}
\\
\bm{C}(\bm{B}\bm{C})^{k} & \bm{0}
\end{pmatrix},
\end{equation*}
and hence $\mathop\mathrm{tr}\bm{A}^{2k}=\mathop\mathrm{tr}(\bm{B}\bm{C})^{k}=\mathop\mathrm{tr}(\bm{C}\bm{B})^{k}$ and $\mathop\mathrm{tr}\bm{A}^{2k+1}=\mathbb{0}$.

We now can write
\begin{equation*}
\mathop\mathrm{Tr}(\bm{A})
=
\bigoplus_{k=1}^{n}\mathop\mathrm{tr}(\bm{C}\bm{B})^{k}
=
\bigoplus_{k=1}^{n}\mathop\mathrm{tr}(\bm{B}\bm{C})^{k}
=
\mathop\mathrm{Tr}(\bm{B}\bm{C}).
\end{equation*}

At the same time, we have
\begin{equation*}
\mathop\mathrm{Tr}(\bm{C}\bm{B})
=
\bigoplus_{k=1}^{m}\mathop\mathrm{tr}(\bm{C}\bm{B})^{k}
\leq
\mathop\mathrm{Tr}(\bm{A}).
\end{equation*}

Since $\mathop\mathrm{Tr}(\bm{C}\bm{B})\leq\mathop\mathrm{Tr}(\bm{A})\leq\mathbb{1}$, we obtain
\begin{equation*}
\bigoplus_{k=1}^{m}(\bm{C}\bm{B})^{k}
=
(\bm{C}\bm{B})(\bm{C}\bm{B})^{\ast}
=
\bigoplus_{k=1}^{n}(\bm{C}\bm{B})^{k}.
\end{equation*}

Turning to traces yields
\begin{equation*}
\mathop\mathrm{Tr}(\bm{C}\bm{B})
=
\bigoplus_{k=1}^{n}\mathop\mathrm{tr}(\bm{C}\bm{B})^{k}
=
\bigoplus_{k=1}^{n}\mathop\mathrm{tr}(\bm{B}\bm{C})^{k}
=
\mathop\mathrm{Tr}(\bm{B}\bm{C})
=
\mathop\mathrm{Tr}(\bm{A}).
\end{equation*}

The case when $m>n$ is examined in the same way.
\qed
\end{proof}

\begin{proposition}
\label{R-Aast}
If $\mathop\mathrm{Tr}(\bm{A})\leq\mathbb{1}$, then the following equalities hold:
\begin{equation*}
\bm{A}^{\ast}
=
\begin{pmatrix}
(\bm{B}\bm{C})^{\ast} & \bm{B}(\bm{C}\bm{B})^{\ast}
\\
\bm{C}(\bm{B}\bm{C})^{\ast} & (\bm{C}\bm{B})^{\ast}
\end{pmatrix}
=
\bigoplus_{k=0}^{\min(m,n)}
\begin{pmatrix}
(\bm{B}\bm{C})^{k} & \bm{B}(\bm{C}\bm{B})^{k}
\\
\bm{C}(\bm{B}\bm{C})^{k} & (\bm{C}\bm{B})^{k}
\end{pmatrix}.
\end{equation*}
\end{proposition}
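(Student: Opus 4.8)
The plan is to obtain the first equality from the series definition of the Kleene star and the second by controlling the number of terms. Since $\mathop\mathrm{Tr}(\bm{A})\leq\mathbb{1}$, Proposition~\ref{p:klstrace} guarantees that $\bm{A}^{\ast}=\bigoplus_{k=0}^{\infty}\bm{A}^{k}$ converges, and I would split this sum according to the parity of $k$, writing $\bm{A}^{\ast}=\bigoplus_{k\geq0}\bm{A}^{2k}\oplus\bigoplus_{k\geq0}\bm{A}^{2k+1}$. The closed forms for $\bm{A}^{2k}$ and $\bm{A}^{2k+1}$ already computed in the proof of Proposition~\ref{R-TrA} show that the even part is block diagonal with diagonal blocks $\bigoplus_{k\geq0}(\bm{B}\bm{C})^{k}$ and $\bigoplus_{k\geq0}(\bm{C}\bm{B})^{k}$, while the odd part is skew block diagonal with blocks $\bigoplus_{k\geq0}\bm{B}(\bm{C}\bm{B})^{k}$ and $\bigoplus_{k\geq0}\bm{C}(\bm{B}\bm{C})^{k}$. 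By Proposition~\ref{R-TrA} we have $\mathop\mathrm{Tr}(\bm{B}\bm{C})=\mathop\mathrm{Tr}(\bm{C}\bm{B})=\mathop\mathrm{Tr}(\bm{A})\leq\mathbb{1}$, so $(\bm{B}\bm{C})^{\ast}$ and $(\bm{C}\bm{B})^{\ast}$ also converge by Proposition~\ref{p:klstrace}; pulling $\bm{B}$ and $\bm{C}$ out of the corresponding sums by distributivity of matrix multiplication over idempotent sums turns $\bigoplus_{k\geq0}\bm{B}(\bm{C}\bm{B})^{k}$ into $\bm{B}(\bm{C}\bm{B})^{\ast}$ and $\bigoplus_{k\geq0}\bm{C}(\bm{B}\bm{C})^{k}$ into $\bm{C}(\bm{B}\bm{C})^{\ast}$. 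Adding the even and odd contributions yields the first displayed equality.

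For the second equality it then suffices to show that each of the four blocks stabilizes after at most $\min(m,n)$ terms; I would assume without loss of generality that $m\leq n$, the case $m>n$ being symmetric. Since $\bm{C}\bm{B}\in\mathbb{X}^{m\times m}$ and $\mathop\mathrm{Tr}(\bm{C}\bm{B})\leq\mathbb{1}$, Proposition~\ref{p:klstrace} together with the bound $(\bm{C}\bm{B})^{m}\leq(\bm{C}\bm{B})^{\ast}$ gives $(\bm{C}\bm{B})^{\ast}=\bigoplus_{k=0}^{m}(\bm{C}\bm{B})^{k}$, which is the lower-right block with the required range $m=\min(m,n)$. Multiplying this identity on the right by $\bm{C}$ and using $(\bm{C}\bm{B})^{k}\bm{C}=\bm{C}(\bm{B}\bm{C})^{k}$ gives $\bm{C}(\bm{B}\bm{C})^{\ast}=\bigoplus_{k=0}^{m}\bm{C}(\bm{B}\bm{C})^{k}$; multiplying on the left by $\bm{B}$ and using $\bm{B}(\bm{C}\bm{B})^{k}=(\bm{B}\bm{C})^{k}\bm{B}$ together with the trivial estimates $\bigoplus_{k=0}^{m-1}\bm{B}(\bm{C}\bm{B})^{k}\leq\bigoplus_{k=0}^{m}\bm{B}(\bm{C}\bm{B})^{k}\leq\bm{B}(\bm{C}\bm{B})^{\ast}$ gives $\bm{B}(\bm{C}\bm{B})^{\ast}=\bigoplus_{k=0}^{m}\bm{B}(\bm{C}\bm{B})^{k}$.

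The one point that needs more care — and the step I expect to be the main obstacle — is the upper-left block $(\bm{B}\bm{C})^{\ast}$: here $\bm{B}\bm{C}$ lives in $\mathbb{X}^{n\times n}$ with possibly $n>m$, so Proposition~\ref{p:klstrace} only produces $n-1$ terms a priori rather than $\min(m,n)=m$. The way around this is the factorization $(\bm{B}\bm{C})^{k+1}=\bm{B}(\bm{C}\bm{B})^{k}\bm{C}$, which yields $\bigoplus_{k\geq1}(\bm{B}\bm{C})^{k}=\bm{B}(\bm{C}\bm{B})^{\ast}\bm{C}=\bm{B}\bigl(\bigoplus_{k=0}^{m-1}(\bm{C}\bm{B})^{k}\bigr)\bm{C}=\bigoplus_{k=1}^{m}(\bm{B}\bm{C})^{k}$; adding the identity matrix gives $(\bm{B}\bm{C})^{\ast}=\bigoplus_{k=0}^{m}(\bm{B}\bm{C})^{k}$ with the range $m=\min(m,n)$. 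Assembling the four blocks gives the second displayed equality and completes the proof.
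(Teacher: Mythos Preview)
Your proof is correct and follows essentially the same route as the paper's: both split the Kleene star of $\bm{A}$ into even and odd powers, use the block forms of $\bm{A}^{2k}$ and $\bm{A}^{2k+1}$ from Proposition~\ref{R-TrA}, and handle the delicate upper-left block $(\bm{B}\bm{C})^{\ast}$ via the factorization $(\bm{B}\bm{C})^{k+1}=\bm{B}(\bm{C}\bm{B})^{k}\bm{C}$ to push the truncation down to $\min(m,n)$. The only cosmetic difference is that you work with the convergent infinite series throughout and then truncate, whereas the paper starts from the finite sum $\bigoplus_{k=0}^{2n-1}\bm{A}^{k}$ given by Proposition~\ref{p:klstrace}; the substance is identical.
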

\begin{proof}
Consider the matrix $\bm{A}^{\ast}$ under assumption that $m\leq n$, and represent this matrix as
\begin{equation*}
\bm{A}^{\ast}
=
\bigoplus_{k=0}^{m+n-1}\bm{A}^{k}
=
\bigoplus_{k=0}^{2n-1}\bm{A}^{k}
=
\bigoplus_{k=0}^{n-1}
\bm{A}^{2k}
\oplus
\bigoplus_{k=0}^{n-1}
\bm{A}^{2k+1}.
\end{equation*}

Substitution of the even and odd powers of the matrix $\bm{A}$ yields
\begin{equation*}
\bm{A}^{\ast}
=
\bigoplus_{k=0}^{n-1}
\begin{pmatrix}
(\bm{B}\bm{C})^{k} & \bm{B}(\bm{C}\bm{B})^{k}
\\
\bm{C}(\bm{B}\bm{C})^{k} & (\bm{C}\bm{B})^{k}
\end{pmatrix}.
\end{equation*}

Note that, under the condition that $m=n$, we immediately have
\begin{equation*}
\bm{A}^{\ast}
=
\begin{pmatrix}
(\bm{B}\bm{C})^{\ast} & \bm{B}(\bm{C}\bm{B})^{\ast}
\\
\bm{C}(\bm{B}\bm{C})^{\ast} & (\bm{C}\bm{B})^{\ast}
\end{pmatrix}.
\end{equation*}

We now assume that $m<n$ and consider the blocks in the matrix $\bm{A}^{\ast}$. First, we note that the inequalities $(\bm{B}\bm{C})^{k}\leq(\bm{B}\bm{C})^{\ast}$ and $(\bm{C}\bm{B})^{k}\leq(\bm{C}\bm{B})^{\ast}$ hold for all integer $k\geq0$ since $\mathop\mathrm{Tr}(\bm{B}\bm{C})=\mathop\mathrm{Tr}(\bm{C}\bm{B})=\mathop\mathrm{Tr}(\bm{A})\leq\mathbb{1}$.

Observing that $m-1\leq n-2$, we represent the upper left block as
\begin{equation*}
\bigoplus_{k=0}^{n-1}
(\bm{B}\bm{C})^{k}
=
\bm{I}
\oplus
\bm{B}
\bigoplus_{k=0}^{n-2}
(\bm{C}\bm{B})^{k}
\bm{C}
=
\bm{I}
\oplus
\bm{B}
(\bm{C}\bm{B})^{\ast}
\bm{C}
=
\bigoplus_{k=0}^{m}
(\bm{B}\bm{C})^{k}.
\end{equation*}

Furthermore, for the upper right block, we have
\begin{equation*}
\bigoplus_{k=0}^{n-1}
\bm{B}(\bm{C}\bm{B})^{k}
=
\bm{B}
(\bm{C}\bm{B})^{\ast}
=
\bigoplus_{k=0}^{m}
\bm{B}(\bm{C}\bm{B})^{k}.
\end{equation*}

Similarly, we calculate the rest two blocks to obtain
\begin{equation*}
\bigoplus_{k=0}^{n-1}
\bm{C}(\bm{B}\bm{C})^{k}
=
\bigoplus_{k=0}^{m}
\bm{C}(\bm{B}\bm{C})^{k},
\qquad
\bigoplus_{k=0}^{n-1}
(\bm{C}\bm{B})^{k}
=
\bigoplus_{k=0}^{m}
(\bm{C}\bm{B})^{k}.
\end{equation*}

Under the condition that $m>n$, the proof follows the same argument.
\qed
\end{proof}

\begin{corollary}
If $\mathop\mathrm{Tr}(\bm{A})\leq\mathbb{1}$ and $m=n$, then
\begin{equation*}
\bm{A}^{\ast}
=
\bigoplus_{k=0}^{n-1}
\begin{pmatrix}
(\bm{B}\bm{C})^{k} & \bm{B}(\bm{C}\bm{B})^{k}
\\
\bm{C}(\bm{B}\bm{C})^{k} & (\bm{C}\bm{B})^{k}
\end{pmatrix}.
\end{equation*}
\end{corollary}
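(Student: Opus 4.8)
The plan is to obtain this as a direct specialization of Proposition~\ref{R-Aast}. For $m=n$ that proposition (and in particular the displayed block form recorded in its proof for the case $m=n$) gives
\[
\bm{A}^{\ast}
=
\begin{pmatrix}
(\bm{B}\bm{C})^{\ast} & \bm{B}(\bm{C}\bm{B})^{\ast}
\\
\bm{C}(\bm{B}\bm{C})^{\ast} & (\bm{C}\bm{B})^{\ast}
\end{pmatrix},
\]
so all that remains is to replace the four Kleene stars by finite sums truncated at index $n-1$ and recombine.

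First I would apply Proposition~\ref{R-TrA} to conclude that $\mathop\mathrm{Tr}(\bm{B}\bm{C})=\mathop\mathrm{Tr}(\bm{C}\bm{B})=\mathop\mathrm{Tr}(\bm{A})\leq\mathbb{1}$. Since $m=n$, the matrices $\bm{B}\bm{C}$ and $\bm{C}\bm{B}$ are both square of order $n$, so Proposition~\ref{p:klstrace} applies to each and yields $(\bm{B}\bm{C})^{\ast}=\bigoplus_{k=0}^{n-1}(\bm{B}\bm{C})^{k}$ and $(\bm{C}\bm{B})^{\ast}=\bigoplus_{k=0}^{n-1}(\bm{C}\bm{B})^{k}$. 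Substituting these into the block form above, and using distributivity of matrix multiplication over $\oplus$ to push the factors $\bm{B}$ and $\bm{C}$ through the finite sums (so that $\bm{B}(\bm{C}\bm{B})^{\ast}=\bigoplus_{k=0}^{n-1}\bm{B}(\bm{C}\bm{B})^{k}$ and likewise $\bm{C}(\bm{B}\bm{C})^{\ast}=\bigoplus_{k=0}^{n-1}\bm{C}(\bm{B}\bm{C})^{k}$), I would then merge the four entrywise sums into a single sum of $2\times2$ block matrices, which is exactly the claimed identity.

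There is no substantial obstacle here: the statement is a one-line consequence of the two earlier propositions. The only point deserving attention is that the common truncation bound $n-1$ must be legitimate for both $\bm{B}\bm{C}$ and $\bm{C}\bm{B}$ simultaneously, and this is precisely what the hypothesis $m=n$ guarantees; when $m\neq n$ the two matrices have different orders and one recovers instead the bound $\min(m,n)$ appearing in Proposition~\ref{R-Aast}.
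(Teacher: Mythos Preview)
Your proposal is correct. The paper gives no separate proof of this corollary, because the claimed identity is literally the intermediate formula
\[
\bm{A}^{\ast}
=
\bigoplus_{k=0}^{n-1}
\begin{pmatrix}
(\bm{B}\bm{C})^{k} & \bm{B}(\bm{C}\bm{B})^{k}
\\
\bm{C}(\bm{B}\bm{C})^{k} & (\bm{C}\bm{B})^{k}
\end{pmatrix}
\]
obtained in the proof of Proposition~\ref{R-Aast} (from splitting $\bm{A}^{\ast}=\bigoplus_{k=0}^{2n-1}\bm{A}^{k}$ into even and odd powers), specialised to $m=n$; your route via the Kleene-star block form and Propositions~\ref{R-TrA} and~\ref{p:klstrace} reaches the same conclusion with a slightly longer but equally valid detour.
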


\section{Solution of two-stage scheduling problem}
\label{s:solution}

Let us now formulate the two-stage problem at \eqref{P-Bi-Level-Simple} in terms of tropical algebra. Using operations of max-plus algebra, we rewrite the problem as
\begin{equation*}
\begin{aligned}
\min_{\bm{x},\bm{y}\; \text{regular}}
&&&
\bigoplus_{1\leq i\leq m}
y_{i}^{-1}\left(\bigoplus_{1\leq j\leq n}a_{ij}x_{j}\right);
\\
\text{s.t.}
&&&
\bigoplus_{1\leq i\leq m}b_{ji}^{-1}y_{i}
\leq
x_{j},
\quad
j=1,\ldots,n;
\\
&&&
(\bm{x},\bm{y})
\in
\arg\min_{\bm{u},\bm{v}\; \text{regular}}
\bigg\{
\bigoplus_{1\leq i\leq m}v_{i}^{-1}\bigg(\bigoplus_{1\leq j\leq n}c_{ij}u_{j}\bigg):
\\
&&&\qquad\qquad
\bigoplus_{1\leq i\leq m}d_{ji}^{-1}v_{i}
\leq
u_{j},
\quad
g_{j}\leq u_{j}\leq h_{j},
\quad
j=1,\ldots,n;
\\
&&&\qquad\qquad
q_{i}\leq v_{i}\leq r_{i},
\quad
i=1,\ldots,m
\bigg\}.
\end{aligned}
\end{equation*}

In addition to the vectors $\bm{y}=(y_{i})$, $\bm{x}=(x_{j})$, $\bm{v}=(v_{i})$ and $\bm{u}=(u_{j})$, we introduce the $(m\times n)$-matrices $\bm{A}=(a_{ij})$, $\bm{B}=(b_{ij})$, $\bm{C}=(c_{ij})$ and $\bm{D}=(d_{ij})$. Furthermore, we introduce the $m$-vectors $\bm{q}=(q_{i})$ and $\bm{r}=(r_{i})$, and $n$-vectors $\bm{g}=(g_{j})$ and $\bm{h}=(h_{j})$.

With this notation, the two-stage problem takes the vector form
\begin{equation}
\begin{aligned}
\min_{\bm{x},\bm{y}\; \text{regular}}
&&&
\bm{y}^{-}\bm{A}\bm{x};
\\
\text{s.t.}
&&&
\bm{B}^{-}\bm{y}
\leq
\bm{x};
\\
&&&
(\bm{x},\bm{y})
\in
\arg\min_{\bm{u},\bm{v}}
\bigg\{
\bm{v}^{-}\bm{C}\bm{u}:
\\
&&&\qquad\qquad\qquad
\bm{D}^{-}\bm{v}
\leq
\bm{u},
\quad
\bm{g}\leq\bm{u}\leq\bm{h},
\quad
\bm{q}\leq\bm{v}\leq\bm{r}
\bigg\}.
\end{aligned}
\label{P-Bi-Level}
\end{equation}

To solve both first-stage and second-stage problems, we apply a solution technique that involves introducing a parameter to represent the value of the objective function in the problem, followed by reducing the problem to a parametrized inequality \cite{Krivulin2014Constrained,Krivulin2015Extremal,Krivulin2015Multidimensional}. Then, the existence conditions for solutions of the inequality are used to derive the minimal value of the parameter. A complete solution of the minimization problem is obtained as the solution of the inequality, which corresponds to the minimum value.

\subsection{Solution of first-stage problem}
\label{ss:lower-level}

Consider the first-stage problem at \eqref{P-Bi-Level} in the vector form
\begin{equation}
\begin{aligned}
\min_{\bm{u},\bm{v}\; \text{regular}}
&&&
\bm{v}^{-}\bm{C}\bm{u};
\\
\text{s.t.}
&&&
\bm{D}^{-}\bm{v}
\leq
\bm{u},
\quad
\bm{g}\leq\bm{u}\leq\bm{h},
\quad
\bm{q}\leq\bm{v}\leq\bm{r}.
\end{aligned}
\label{P:lower-level}
\end{equation}

The next result provides the minimum value of the objective function, and offers a system of inequalities that define the solution set of the problem.
\begin{lemma}
\label{L:lower-level}
Problem \eqref{P:lower-level} with regular vectors $\bm{h}$ and $\bm{r}$ and a nonzero matrix $\bm{C}$ is feasible if and only if $\bm{h}^{-}\bm{g}\oplus(\bm{h}^{-}\bm{D}^{-}\oplus\bm{r}^{-})\bm{q}\leq
\mathbb{1}$.

Under these conditions the optimal value of \eqref{P:lower-level} is greater than $\mathbb{0}$ and equal to
\begin{multline}
\mu
=
\rho(\bm{C}\bm{D}^{-})
\oplus
\bigoplus_{k=1}^{\min(m,n)}
\left(
\bm{h}^{-}(\bm{D}^{-}\bm{C})^{k}\bm{g}
\oplus
(\bm{h}^{-}\bm{D}^{-}
\oplus
\bm{r}^{-})(\bm{C}\bm{D}^{-})^{k}\bm{q}
\right)^{1/k}
\\\oplus
\bigoplus_{k=0}^{\min(m,n)}
(\bm{r}^{-}\bm{C}(\bm{D}^{-}\bm{C})^{k}\bm{g})^{1/(k+1)}.
\label{E:mu}
\end{multline}

Regular vectors $\bm{u}$ and $\bm{v}$ are solutions of \eqref{P:lower-level} if and only if they satisfy the following system of inequalities:
\begin{equation}
\begin{aligned}
\mu^{-1}\bm{C}\bm{u}\oplus\bm{q}
\leq
\bm{v}
\leq
\bm{r},
\\
\bm{D}^{-}\bm{v}\oplus\bm{g}
\leq
\bm{u}
\leq
\bm{h}.
\end{aligned}
\label{e-LowLevelSystem}
\end{equation}
\end{lemma}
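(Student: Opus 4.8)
The plan is to use the standard parametrization technique: introduce a scalar parameter $\mu$ to bound the objective $\bm{v}^{-}\bm{C}\bm{u}$, so that $\bm{v}^{-}\bm{C}\bm{u}\leq\mu$ becomes, by Proposition~\ref{P-Axleqd} applied with the roles of the relevant matrices and vectors arranged appropriately, the inequality $\mu^{-1}\bm{C}\bm{u}\leq\bm{v}$. Combining this with the box constraints $\bm{q}\leq\bm{v}$, $\bm{D}^{-}\bm{v}\leq\bm{u}$ and $\bm{g}\leq\bm{u}$, and the upper bounds $\bm{v}\leq\bm{r}$, $\bm{u}\leq\bm{h}$, the feasibility of \eqref{P:lower-level} at level $\mu$ is equivalent to the existence of regular $\bm{u},\bm{v}$ satisfying the system \eqref{e-LowLevelSystem} (with $\mu^{-1}\bm{C}\bm{u}$ in place of the generic lower bound). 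So the first step is to rewrite \eqref{e-LowLevelSystem} as a single double inequality of the form $\bm{W}\bm{w}\oplus\bm{p}\leq\bm{w}\leq\bm{s}$ for the stacked vector $\bm{w}=(\bm{v}^{T},\bm{u}^{T})^{T}$, where $\bm{W}$ is the skew block diagonal matrix with off-diagonal blocks $\mu^{-1}\bm{C}$ and $\bm{D}^{-}$, $\bm{p}=(\bm{q}^{T},\bm{g}^{T})^{T}$, and $\bm{s}=(\bm{r}^{T},\bm{h}^{T})^{T}$.

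Next I would invoke Proposition~\ref{P-Axbleqxleqd}: the system has a regular solution if and only if $\Delta=\mathop\mathrm{Tr}(\bm{W})\oplus\bm{s}^{-}\bm{W}^{\ast}\bm{p}\leq\mathbb{1}$, and in that case the solution set is $\bm{w}=\bm{W}^{\ast}\bm{t}$ with $\bm{p}\leq\bm{t}\leq(\bm{s}^{-}\bm{W}^{\ast})^{-}$. The key is then to make $\Delta\leq\mathbb{1}$ explicit. For this I would use the skew block diagonal machinery: Proposition~\ref{R-TrA} gives $\mathop\mathrm{Tr}(\bm{W})=\bigoplus_{k=1}^{\min(m,n)}\mathop\mathrm{tr}(\mu^{-1}\bm{C}\bm{D}^{-})^{k}=\bigoplus_{k}\mu^{-k}\mathop\mathrm{tr}(\bm{C}\bm{D}^{-})^{k}$, whose boundedness by $\mathbb{1}$ is exactly $\rho(\bm{C}\bm{D}^{-})\leq\mu$; and Proposition~\ref{R-Aast} gives the block form of $\bm{W}^{\ast}$ in terms of $(\bm{C}\bm{D}^{-})^{k}$, $(\bm{D}^{-}\bm{C})^{k}$, $\bm{C}(\bm{D}^{-}\bm{C})^{k}$, $\bm{D}^{-}(\bm{C}\bm{D}^{-})^{k}$. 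Substituting these blocks into $\bm{s}^{-}\bm{W}^{\ast}\bm{p}$ and expanding $\bm{s}^{-}=(\bm{r}^{-},\bm{h}^{-})$, $\bm{p}=(\bm{q}^{T},\bm{g}^{T})^{T}$, every summand carries a power $\mu^{-k}$ or $\mu^{-(k+1)}$, so $\Delta\leq\mathbb{1}$ turns into a finite family of scalar inequalities of the form $(\text{expression})\leq\mu^{k}$, i.e. $(\text{expression})^{1/k}\leq\mu$. Collecting them yields precisely the right-hand side of \eqref{E:mu}; the minimal feasible $\mu$ is this maximum, which is $\mu$ as defined.

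I would then treat the feasibility condition of the original problem \eqref{P:lower-level} (without the objective bound) separately: it corresponds to dropping the term $\mu^{-1}\bm{C}\bm{u}$, i.e. to the system $\bm{q}\leq\bm{v}\leq\bm{r}$, $\bm{D}^{-}\bm{v}\oplus\bm{g}\leq\bm{u}\leq\bm{h}$. The same Proposition~\ref{P-Axbleqxleqd} / block argument, now with $\bm{W}$ replaced by the matrix having only the block $\bm{D}^{-}$ in one off-diagonal position and zero in the other (so $\mathop\mathrm{Tr}$ of it is $\mathbb{0}$), reduces the existence condition to $\bm{h}^{-}\bm{g}\oplus(\bm{h}^{-}\bm{D}^{-}\oplus\bm{r}^{-})\bm{q}\leq\mathbb{1}$, which is the claimed feasibility criterion. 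That $\mu>\mathbb{0}$ follows because $\bm{C}\neq\bm{0}$ forces at least one of the listed terms (e.g. a diagonal-type or $\bm{r}^{-}\bm{C}\bm{g}$-type term) to be nonzero, hence $\mu\neq\mathbb{0}$. Finally, once the minimal $\mu$ is identified, the solution set description of Proposition~\ref{P-Axbleqxleqd} for $\bm{w}=(\bm{v}^{T},\bm{u}^{T})^{T}$ says in particular that any regular optimal pair $(\bm{u},\bm{v})$ must satisfy \eqref{e-LowLevelSystem}, and conversely any regular pair satisfying \eqref{e-LowLevelSystem} is feasible with objective value $\leq\mu$, hence optimal; this gives the stated characterization of the solution set.

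The main obstacle I expect is the bookkeeping in expanding $\bm{s}^{-}\bm{W}^{\ast}\bm{p}$ into its four block contributions and correctly tracking the powers of $\mu$ so that the exponents $1/k$ versus $1/(k+1)$ come out right: the upper-left block $(\bm{C}\bm{D}^{-})^{\ast}$ paired with $\bm{r}^{-}$ and $\bm{q}$ contributes the $(\bm{r}^{-}(\bm{C}\bm{D}^{-})^{k}\bm{q})^{1/k}$ family, the lower-right $(\bm{D}^{-}\bm{C})^{\ast}$ paired with $\bm{h}^{-}$ and $\bm{g}$ gives $(\bm{h}^{-}(\bm{D}^{-}\bm{C})^{k}\bm{g})^{1/k}$, the off-diagonal blocks $\bm{D}^{-}(\bm{C}\bm{D}^{-})^{k}$ and $\bm{C}(\bm{D}^{-}\bm{C})^{k}$ produce the $(\bm{h}^{-}\bm{D}^{-}(\bm{C}\bm{D}^{-})^{k}\bm{q})^{1/k}$ and $(\bm{r}^{-}\bm{C}(\bm{D}^{-}\bm{C})^{k}\bm{g})^{1/(k+1)}$ families, and one must check the index ranges $k=1,\ldots,\min(m,n)$ and $k=0,\ldots,\min(m,n)$ agree with the block-star truncation of Proposition~\ref{R-Aast}. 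Everything else is a fairly mechanical application of Propositions~\ref{P-Axleqd}, \ref{P-Axbleqxleqd}, \ref{R-TrA}, \ref{R-Aast} together with the identity $\rho(\bm{C}\bm{D}^{-})=\rho(\bm{D}^{-}\bm{C})$ and monotonicity/inversion rules in the semifield.
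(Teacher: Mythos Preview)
Your proposal is correct and follows essentially the same route as the paper: parametrize the objective, convert $\bm{v}^{-}\bm{C}\bm{u}\leq\theta$ into $\theta^{-1}\bm{C}\bm{u}\leq\bm{v}$ via Proposition~\ref{P-Axleqd}, stack the system into a skew block diagonal inequality, and apply Propositions~\ref{P-Axbleqxleqd}, \ref{R-TrA}, \ref{R-Aast} to extract the lower bounds on $\theta$ that assemble into~\eqref{E:mu}. The only cosmetic differences are that the paper orders the stacked vector as $(\bm{u}^{T},\bm{v}^{T})^{T}$ rather than $(\bm{v}^{T},\bm{u}^{T})^{T}$, and it reads off the feasibility condition $\bm{h}^{-}\bm{g}\oplus(\bm{h}^{-}\bm{D}^{-}\oplus\bm{r}^{-})\bm{q}\leq\mathbb{1}$ directly as the $\theta$-independent ($k=0$) part of the expansion of $\bm{t}^{-}\bm{F}_{\theta}^{\ast}\bm{s}$, whereas you propose to treat it as a separate degenerate case; both are fine.
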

\begin{proof}
To solve the problem obtained, we first define an auxiliary parameter $\theta$ to represent the problem as follows:
\begin{equation*}
\begin{aligned}
\min_{\bm{u},\bm{v}\; \text{regular}}
&&&
\theta;
\\
\text{s.t.}
&&&
\bm{v}^{-}\bm{C}\bm{u}
\leq
\theta,
\\
&&&
\bm{D}^{-}\bm{v}
\leq
\bm{u},
\quad
\bm{g}\leq\bm{u}\leq\bm{h},
\quad
\bm{q}\leq\bm{v}\leq\bm{r}.
\end{aligned}
\end{equation*}

If the vectors $\bm{r}$ and $\bm{h}$ are regular and $\bm{C}\ne\bm{0}$, then the function $\bm{v}^{-}\bm{C}\bm{u}$ with regular $\bm{v}$ and $\bm{u}$ is bounded from below, and the optimal value of $\theta$ is greater than $\mathbb{0}$.

Furthermore, we rearrange the inequality constraints in the problem. We apply Proposition~\ref{P-Axleqd} to solve the first inequality with respect to $\bm{C}\bm{u}$, and then rewrite the result as $\theta^{-1}\bm{C}\bm{u}\leq\bm{v}$. Next, by coupling the inequality obtained with the last inequality constraint, and the second inequality with the third, we represent the problem in the form
\begin{equation}
\begin{aligned}
\min_{\bm{u},\bm{v}\; \text{regular}}
&&&
\theta;
\\
\text{s.t.}
&&&
\theta^{-1}\bm{C}\bm{u}\oplus\bm{q}
\leq
\bm{v}
\leq
\bm{r},
\\
&&&
\bm{D}^{-}\bm{v}\oplus\bm{g}
\leq
\bm{u}
\leq
\bm{h}.
\end{aligned}
\label{P-minuvtheta-theta1Cugleqvleqh-Dvqlequleqr}
\end{equation}

We define the following skew block diagonal matrix and vectors:
\begin{equation*}
\bm{F}_{\theta}
=
\begin{pmatrix}
\bm{0} & \bm{D}^{-}
\\
\theta^{-1}\bm{C} & \bm{0}
\end{pmatrix},
\quad
\bm{s}
=
\begin{pmatrix}
\bm{g}
\\
\bm{q}
\end{pmatrix},
\quad
\bm{t}
=
\begin{pmatrix}
\bm{h}
\\
\bm{r}
\end{pmatrix},
\quad
\bm{z}
=
\begin{pmatrix}
\bm{u}
\\
\bm{v}
\end{pmatrix},
\end{equation*}
and note that the matrix $\bm{F}_{\theta}$ is of order $m+n$, and the vectors $\bm{s}$, $\bm{t}$ and $\bm{z}$ are of order $m+n$. With this notation, the problem becomes
\begin{equation}
\begin{aligned}
\min_{\bm{z}\; \text{regular}}
&&&
\theta;
\\
\text{s.t.}
&&&
\bm{F}_{\theta}\bm{z}\oplus\bm{s}
\leq
\bm{z}
\leq
\bm{t}.
\end{aligned}
\label{P-minztheta-Fxsleqzleqt}
\end{equation}

We now use the existence condition for regular solutions from Proposition~\ref{P-Axbleqxleqd} to define the feasible set for the parameter $\theta$ in problem \eqref{P-minztheta-Fxsleqzleqt}. The condition takes the form of the inequality $\mathop\mathrm{Tr}(\bm{F}_{\theta})\oplus\bm{t}^{-}\bm{F}_{\theta}^{\ast}\bm{s}\leq\mathbb{1}$, and thus problem \eqref{P-minztheta-Fxsleqzleqt} reduces to finding the minimum value of the parameter $\theta$, over all $\theta$ that satisfy this inequality.

To solve this inequality, we note that it is equivalent to two inequalities
\begin{equation*}
\mathop\mathrm{Tr}(\bm{F}_{\theta})\leq\mathbb{1},
\qquad
\bm{t}^{-}\bm{F}_{\theta}^{\ast}\bm{s}\leq\mathbb{1}.
\end{equation*}

Consider the first inequality, and assume that $m\leq n$. Observing that the matrix $\bm{C}\bm{D}^{-}$ is of order $m$, we apply Proposition~\ref{R-TrA} to write
\begin{equation*}
\mathop\mathrm{Tr}(\bm{F}_{\theta})
=
\bigoplus_{k=1}^{m}\theta^{-k}\mathop\mathrm{tr}(\bm{C}\bm{D}^{-})^{k}
\leq
\mathbb{1}.
\end{equation*}

This inequality is equivalent to the system of inequalities
\begin{equation*}
\theta^{-k}\mathop\mathrm{tr}(\bm{C}\bm{D}^{-})^{k}
\leq
\mathbb{1},
\quad
k=1,\ldots,m,
\end{equation*}
which can be solved for $\theta$ as follows:
\begin{equation*}
\mathop\mathrm{tr}\nolimits^{1/k}((\bm{C}\bm{D}^{-})^{k})
\leq
\theta,
\quad
k=1,\ldots,m.
\end{equation*}

Combining the inequalities into one yields the inequality
\begin{equation*}
\theta
\geq
\bigoplus_{k=1}^{m}
\mathop\mathrm{tr}\nolimits^{1/k}((\bm{C}\bm{D}^{-})^{k})
=
\rho(\bm{C}\bm{D}^{-})
=
\rho(\bm{D}^{-}\bm{C}),
\end{equation*}
which does not change if the condition $m\leq n$ is replaced by $m>n$.

To examine the second inequality $\bm{t}^{-}\bm{F}_{\theta}^{\ast}\bm{s}\leq\mathbb{1}$, we first use Proposition~\ref{R-Aast} to obtain the matrix
\begin{equation*}
\bm{F}_{\theta}^{\ast}
=
\bigoplus_{k=0}^{\min(m,n)}
\begin{pmatrix}
(\theta^{-1}\bm{D}^{-}\bm{C})^{k} & \bm{D}^{-}(\theta^{-1}\bm{C}\bm{D}^{-})^{k}
\\
\theta^{-1}\bm{C}(\theta^{-1}\bm{D}^{-}\bm{C})^{k} & (\theta^{-1}\bm{C}\bm{D}^{-})^{k}
\end{pmatrix}.
\end{equation*}

After substitution of the matrix $\bm{F}_{\theta}^{\ast}$ and vectors $\bm{t}$ and $\bm{s}$, the inequality becomes
\begin{multline*}
\bigoplus_{k=0}^{\min(m,n)}
\theta^{-k}
\left(
\bm{h}^{-}(\bm{D}^{-}\bm{C})^{k}\bm{g}
\oplus
(\bm{h}^{-}\bm{D}^{-}\oplus\bm{r}^{-})
(\bm{C}\bm{D}^{-})^{k}\bm{q}
\right)
\\\oplus
\bigoplus_{k=0}^{\min(m,n)}
\theta^{-k-1}
\bm{r}^{-}\bm{C}(\bm{D}^{-}\bm{C})^{k}\bm{g}
\leq
\mathbb{1},
\end{multline*}
which can be represented as the system of inequalities
\begin{align*}
\bm{h}^{-}\bm{g}
\oplus
(\bm{h}^{-}\bm{D}^{-}
\oplus
\bm{r}^{-})\bm{q}
&\leq
\mathbb{1},
\\
\bigoplus_{k=1}^{\min(m,n)}
\theta^{-k}
\left(
\bm{h}^{-}(\bm{D}^{-}\bm{C})^{k}\bm{g}
\oplus
(\bm{h}^{-}\bm{D}^{-}
\oplus
\bm{r}^{-})(\bm{C}\bm{D}^{-})^{k}\bm{q}
\right)
&\leq
\mathbb{1},
\\
\bigoplus_{k=0}^{\min(m,n)}
\theta^{-(k+1)}\bm{r}^{-}\bm{C}(\bm{D}^{-}\bm{C})^{k}\bm{g}
&\leq
\mathbb{1}.
\end{align*}

The first inequality in the system are valid by the assumption of the lemma. By solving the last two inequalities with respect to $\theta$ in the same way as above, we obtain
\begin{align*}
\theta
&\geq
\bigoplus_{k=1}^{\min(m,n)}
\left(
\bm{h}^{-}(\bm{D}^{-}\bm{C})^{k}\bm{g}
\oplus
(\bm{h}^{-}\bm{D}^{-}
\oplus
\bm{r}^{-})(\bm{C}\bm{D}^{-})^{k}\bm{q}
\right)^{1/k},
\\
\theta
&\geq
\bigoplus_{k=0}^{\min(m,n)}
(\bm{r}^{-}\bm{C}(\bm{D}^{-}\bm{C})^{k}\bm{g})^{1/(k+1)}.
\end{align*}

By combining all lower bounds for $\theta$, we have
\begin{multline*}
\theta
\geq
\rho(\bm{C}\bm{D}^{-})
\oplus
\bigoplus_{k=1}^{\min(m,n)}
\left(
\bm{h}^{-}(\bm{D}^{-}\bm{C})^{k}\bm{g}
\oplus
(\bm{h}^{-}\bm{D}^{-}
\oplus
\bm{r}^{-})(\bm{C}\bm{D}^{-})^{k}\bm{q}
\right)^{1/k}
\\\oplus
\bigoplus_{k=0}^{\min(m,n)}
(\bm{r}^{-}\bm{C}(\bm{D}^{-}\bm{C})^{k}\bm{g})^{1/(k+1)}.
\end{multline*}

Setting the minimum $\mu$ in the problem to be equal to the combined bound yields \eqref{E:mu}. Finally, by replacing $\theta$ by $\mu$ in the system of inequality constraints of \eqref{P-minuvtheta-theta1Cugleqvleqh-Dvqlequleqr}, we obtain the system of inequalities at \eqref{e-LowLevelSystem}.
\qed
\end{proof}

\subsection{Solution of second-stage problem}
\label{ss:upperlevel}

Consider the second-stage problem, and adjust the description of the feasible solution set by adding the constraints elevated from the solution of the first-stage problem. With the new constraints  obtained by replacing $\bm{u}$ and $\bm{v}$ by $\bm{x}$ and $\bm{y}$ in the system at \eqref{e-LowLevelSystem}, we have the problem
\begin{equation}
\begin{aligned}
\min_{\bm{x},\bm{y}\; \text{regular}}
&&&
\bm{y}^{-}\bm{A}\bm{x};
\\
\text{s.t.}
&&&
\bm{B}^{-}\bm{y}
\leq
\bm{x},
\\
&&&
\mu^{-1}\bm{C}\bm{x}\oplus\bm{q}
\leq
\bm{y}
\leq
\bm{r},
\\
&&&
\bm{D}^{-}\bm{y}\oplus\bm{g}
\leq
\bm{x}
\leq
\bm{h}.
\end{aligned}
\label{MainProblem2}
\end{equation}
where $\mu$ is given by \eqref{E:mu}.

To describe a solution of this problem, we define the matrices
\begin{equation*}
\bm{D}_{1}^{-}
=
\bm{B}^{-}\oplus\bm{D}^{-},
\qquad
\bm{C}_{1}
=
\mu^{-1}\bm{C},
\end{equation*}
and then introduce the notation
\begin{equation*}
\bm{P}
=
\bm{A}\bm{D}_{1}^{-},
\qquad
\bm{Q}
=
\bm{C}_{1}\bm{D}_{1}^{-},
\qquad
\bm{R}
=
\bm{D}_{1}^{-}\bm{A},
\qquad
\bm{S}
=
\bm{D}_{1}^{-}\bm{C}_{1}.
\end{equation*}

\begin{theorem}
\label{T:upper-level}
Suppose that $\bm{h}$ and $\bm{r}$ are regular vectors and $\bm{A}$ is a nonzero matrix. Then, problem \eqref{MainProblem2} is feasible if and only if
\begin{equation*}
\mathop\mathrm{Tr}(\bm{Q})
\oplus
(\bm{h}^{-}\bm{D}_{1}^{-}
\oplus
\bm{r}^{-})\bm{Q}^{\ast}\bm{q}
\oplus
(\bm{r}^{-}\bm{C}_{1}
\oplus
\bm{h}^{-})\bm{S}^{\ast}\bm{g}
\leq
\mathbb{1}.
\end{equation*}

In this case, the optimal value of \eqref{MainProblem2} is greater than $\mathbb{0}$ and equal to
\begin{multline}
\eta
=
\bigoplus_{k=1}^{\min(m,n)}
\bigoplus_{0\leq i_{0}+i_{1}+\cdots+i_{k}\leq\min(m,n)-k}
\mathop\mathrm{tr}\nolimits^{1/k}
(\bm{Q}^{i_{0}}(\bm{P}\bm{Q}^{i_{1}}\cdots\bm{P}\bm{Q}^{i_{k}}))
\\\oplus
\bigoplus_{k=1}^{\min(m,n)}
\bigoplus_{0\leq i_{0}+i_{1}+\cdots+i_{k}\leq \min(m,n)-k}
((\bm{r}^{-}\bm{C}_{1}
\oplus
\bm{h}^{-})
\bm{S}^{i_{0}}(\bm{R}\bm{S}^{i_{1}}\cdots\bm{R}\bm{S}^{i_{k}})
\bm{g})^{1/k}
\\\oplus
\bigoplus_{k=1}^{\min(m,n)}
\bigoplus_{0\leq i_{0}+i_{1}+\cdots+i_{k}\leq \min(m,n)-k}
((\bm{h}^{-}\bm{D}_{1}^{-}
\oplus
\bm{r}^{-})
\bm{Q}^{i_{0}}(\bm{P}\bm{Q}^{i_{1}}\cdots\bm{P}\bm{Q}^{i_{k}})
\bm{q})^{1/k}
\\\oplus
\bigoplus_{k=0}^{\min(m,n)}
\bigoplus_{0\leq i_{0}+i_{1}+\cdots+i_{k}\leq \min(m,n)-k}
(\bm{r}^{-}\bm{A}
\bm{S}^{i_{0}}(\bm{R}\bm{S}^{i_{1}}\cdots\bm{R}\bm{S}^{i_{k}})
\bm{g})^{1/(k+1)}.
\label{e:eta}
\end{multline}

All regular solutions of the problem are given by
\begin{align}
\bm{x}
&=
(\eta^{-1}\bm{R}\oplus\bm{S})^{\ast}
(\bm{u}
\oplus
\bm{D}_{1}^{-}\bm{v}),
\nonumber\\
\bm{y}
&=
(\eta^{-1}\bm{P}\oplus\bm{Q})^{\ast}
((\eta^{-1}\bm{A}\oplus\bm{C}_{1})\bm{u}
\oplus
\bm{v}),
\label{e:xy}
\end{align}
where $\bm{u}$ and $\bm{v}$ are any regular vectors that satisfy the conditions
\begin{align}
\bm{g}
&\leq
\bm{u}
\leq
((\bm{h}^{-}
\oplus
\bm{r}^{-}
(\eta^{-1}\bm{A}\oplus\bm{C}_{1}))(\eta^{-1}\bm{R}\oplus\bm{S})^{\ast})^{-},
\nonumber\\
\bm{q}
&\leq
\bm{v}
\leq
((\bm{h}^{-}
\bm{D}_{1}^{-}
\oplus
\bm{r}^{-})
(\eta^{-1}\bm{P}\oplus\bm{Q})^{\ast})^{-}.
\label{e:uv}
\end{align}
\end{theorem}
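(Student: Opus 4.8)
The plan is to mirror the strategy used in Lemma~\ref{L:lower-level}: introduce a parameter $\eta$ for the objective $\bm{y}^{-}\bm{A}\bm{x}$, convert the minimization into a parametrized feasibility problem, collapse all the constraints of \eqref{MainProblem2} into a single double inequality of the form $\bm{F}_{\eta}\bm{z}\oplus\bm{s}\leq\bm{z}\leq\bm{t}$ for a suitable skew block diagonal matrix $\bm{F}_{\eta}$, and then apply Proposition~\ref{P-Axbleqxleqd} together with the trace/Kleene-star identities for skew block matrices (Propositions~\ref{R-TrA} and~\ref{R-Aast}) and the binomial trace identities of Subsection~\ref{ss:traces}.

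First I would rewrite $\bm{y}^{-}\bm{A}\bm{x}\leq\eta$ via Proposition~\ref{P-Axleqd} as $\eta^{-1}\bm{A}\bm{x}\leq\bm{y}$, and then merge the four constraint lines of \eqref{MainProblem2}. The inequalities $\bm{B}^{-}\bm{y}\leq\bm{x}$ and $\bm{D}^{-}\bm{y}\oplus\bm{g}\leq\bm{x}\leq\bm{h}$ combine (using $\bm{D}_{1}^{-}=\bm{B}^{-}\oplus\bm{D}^{-}$) into $\bm{D}_{1}^{-}\bm{y}\oplus\bm{g}\leq\bm{x}\leq\bm{h}$, while $\eta^{-1}\bm{A}\bm{x}\leq\bm{y}$ and $\mu^{-1}\bm{C}\bm{x}\oplus\bm{q}\leq\bm{y}\leq\bm{r}$ combine (using $\bm{C}_{1}=\mu^{-1}\bm{C}$) into $(\eta^{-1}\bm{A}\oplus\bm{C}_{1})\bm{x}\oplus\bm{q}\leq\bm{y}\leq\bm{r}$. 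Setting
\begin{equation*}
\bm{F}_{\eta}
=
\begin{pmatrix}
\bm{0} & \bm{D}_{1}^{-}
\\
\eta^{-1}\bm{A}\oplus\bm{C}_{1} & \bm{0}
\end{pmatrix},
\quad
\bm{s}=\begin{pmatrix}\bm{g}\\\bm{q}\end{pmatrix},
\quad
\bm{t}=\begin{pmatrix}\bm{h}\\\bm{r}\end{pmatrix},
\quad
\bm{z}=\begin{pmatrix}\bm{x}\\\bm{y}\end{pmatrix},
\end{equation*}
the problem becomes $\min\{\eta: \bm{F}_{\eta}\bm{z}\oplus\bm{s}\leq\bm{z}\leq\bm{t},\ \bm{z}\ \text{regular}\}$, and by Proposition~\ref{P-Axbleqxleqd} the feasible $\eta$ are exactly those with $\mathop\mathrm{Tr}(\bm{F}_{\eta})\oplus\bm{t}^{-}\bm{F}_{\eta}^{\ast}\bm{s}\leq\mathbb{1}$. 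Here one uses that $\bm{F}_{\eta}^{2}$ has diagonal blocks $\bm{D}_{1}^{-}(\eta^{-1}\bm{A}\oplus\bm{C}_{1})=\eta^{-1}\bm{R}\oplus\bm{S}$ and $(\eta^{-1}\bm{A}\oplus\bm{C}_{1})\bm{D}_{1}^{-}=\eta^{-1}\bm{P}\oplus\bm{Q}$, so Propositions~\ref{R-TrA}, \ref{R-Aast} express $\mathop\mathrm{Tr}(\bm{F}_{\eta})$ and $\bm{F}_{\eta}^{\ast}$ in terms of these binomials; expanding them by \eqref{E-oplus1ptrABk} and the formula for $(\bm{A}\oplus\bm{B})^{\ast}$ and then solving each resulting inequality $\eta^{-k}(\cdots)\leq\mathbb{1}$ for $\eta$ produces exactly the four families of lower bounds in \eqref{e:eta}; the $\eta$-free terms reduce to the feasibility condition stated in the theorem. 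The minimum value $\eta$ is the join of all these bounds.

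For the solution set, I would substitute $\eta$ for the parameter in $\bm{F}_{\eta}\bm{z}\oplus\bm{s}\leq\bm{z}\leq\bm{t}$ and apply part~(i) of Proposition~\ref{P-Axbleqxleqd}: all regular solutions are $\bm{z}=\bm{F}_{\eta}^{\ast}\bm{u}'$ with $\bm{s}\leq\bm{u}'\leq(\bm{t}^{-}\bm{F}_{\eta}^{\ast})^{-}$. Writing $\bm{u}'=(\bm{u}^{T},\bm{v}^{T})^{T}$ and plugging in the block form of $\bm{F}_{\eta}^{\ast}$ from Proposition~\ref{R-Aast} (with $\bm{B}\leftarrow\bm{D}_{1}^{-}$, $\bm{C}\leftarrow\eta^{-1}\bm{A}\oplus\bm{C}_{1}$, so $(\bm{B}\bm{C})^{\ast}\leftarrow(\eta^{-1}\bm{P}\oplus\bm{Q})^{\ast}$ and $(\bm{C}\bm{B})^{\ast}\leftarrow(\eta^{-1}\bm{R}\oplus\bm{S})^{\ast}$) yields the block formulas \eqref{e:xy} for $\bm{x}$ and $\bm{y}$; computing $\bm{t}^{-}\bm{F}_{\eta}^{\ast}=(\bm{h}^{-},\bm{r}^{-})\bm{F}_{\eta}^{\ast}$ blockwise and conjugating gives the upper bounds in \eqref{e:uv}, with the lower bounds $\bm{g}\leq\bm{u}$, $\bm{q}\leq\bm{v}$ coming directly from $\bm{s}\leq\bm{u}'$. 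Finally, regularity of $\bm{r},\bm{h}$ and nonzeroness of $\bm{A}$ guarantee $\bm{y}^{-}\bm{A}\bm{x}$ is bounded below by a positive quantity on regular vectors, so $\eta>\mathbb{0}$.

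The main obstacle I anticipate is purely bookkeeping: faithfully carrying out the binomial expansions of $\mathop\mathrm{Tr}(\eta^{-1}\bm{P}\oplus\bm{Q})$, $(\eta^{-1}\bm{P}\oplus\bm{Q})^{\ast}$, $(\eta^{-1}\bm{R}\oplus\bm{S})^{\ast}$ and tracking how the power of $\eta^{-1}$ attached to each term $\bm{Q}^{i_0}(\bm{P}\bm{Q}^{i_1}\cdots\bm{P}\bm{Q}^{i_k})$ equals the number $k$ of occurrences of $\bm{P}$ (so that the corresponding inequality is solved by taking a $1/k$-th root, and $1/(k+1)$-th for the terms carrying an extra $\eta^{-1}$ from the off-diagonal block containing $\bm{A}$). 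One must also be careful that $\bm{C}_{1}=\mu^{-1}\bm{C}$ uses the already-determined $\mu$ from \eqref{E:mu}, not $\eta$, so only $\bm{A}$ carries the $\eta^{-1}$; and that the $\min(m,n)$ summation limits are justified exactly as in Propositions~\ref{R-TrA} and~\ref{R-Aast} (via $\mathop\mathrm{Tr}(\bm{F}_{\eta})\leq\mathbb{1}$ permitting truncation). Matching the $\eta$-independent leftover terms against the stated feasibility inequality $\mathop\mathrm{Tr}(\bm{Q})\oplus(\bm{h}^{-}\bm{D}_{1}^{-}\oplus\bm{r}^{-})\bm{Q}^{\ast}\bm{q}\oplus(\bm{r}^{-}\bm{C}_{1}\oplus\bm{h}^{-})\bm{S}^{\ast}\bm{g}\leq\mathbb{1}$ is the last consistency check, and it follows by collecting the $k$ with no $\eta$-factor (equivalently $i$-indices giving $\bm{Q}^{\ast}$, resp.\ $\bm{S}^{\ast}$) in each sum.
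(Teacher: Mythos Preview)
Your proposal is correct and follows essentially the same approach as the paper's proof: introduce a parameter (the paper calls it $\theta$), merge the constraints into a single skew block diagonal inequality $\bm{G}_{\theta}\bm{z}\oplus\bm{s}\leq\bm{z}\leq\bm{t}$, apply Proposition~\ref{P-Axbleqxleqd}, and then expand $\mathop\mathrm{Tr}(\bm{G}_{\theta})$ and $\bm{t}^{-}\bm{G}_{\theta}^{\ast}\bm{s}$ via Propositions~\ref{R-TrA}, \ref{R-Aast} and the binomial identity \eqref{E-oplus1ptrABk} to extract the lower bounds on $\theta$ and the feasibility condition. One small slip to watch when you write it up: with $\bm{B}\leftarrow\bm{D}_{1}^{-}$ and $\bm{C}\leftarrow\eta^{-1}\bm{A}\oplus\bm{C}_{1}$ you have $\bm{B}\bm{C}=\eta^{-1}\bm{R}\oplus\bm{S}$ and $\bm{C}\bm{B}=\eta^{-1}\bm{P}\oplus\bm{Q}$, the reverse of what you wrote in the parenthesis, but this does not affect the argument since you correctly point to \eqref{e:xy} and \eqref{e:uv}.
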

\begin{proof}
As in the proof of the previous lemma, we use an additional parameter $\theta$ and combine inequality constraints to replace \eqref{MainProblem2} by the problem
\begin{equation*}
\begin{aligned}
\min_{\bm{x},\bm{y}\; \text{regular}}
&&&
\theta;
\\
\text{s.t.}
&&&
(\theta^{-1}\bm{A}
\oplus
\mu^{-1}\bm{C})\bm{x}
\oplus
\bm{q}
\leq
\bm{y}
\leq
\bm{r},
\\
&&&
(\bm{B}^{-}
\oplus
\bm{D}^{-})\bm{y}
\oplus
\bm{g}
\leq
\bm{x}
\leq
\bm{h}.
\end{aligned}
\end{equation*}

Furthermore, we introduce the notation
\begin{equation*}
\bm{C}_{\theta}
=
\theta^{-1}\bm{A}\oplus\mu^{-1}\bm{C}
=
\theta^{-1}\bm{A}\oplus\bm{C}_{1},
\end{equation*}
and note that, with $\bm{D}_{1}^{-}=\bm{B}^{-}\oplus\bm{D}^{-}$, we have
\begin{equation*}
\bm{D}_{1}^{-}\bm{C}_{\theta}
=
\theta^{-1}\bm{R}\oplus\bm{S},
\qquad
\bm{C}_{\theta}\bm{D}_{1}^{-}
=
\theta^{-1}\bm{P}\oplus\bm{Q}.
\end{equation*}

The problem now becomes
\begin{equation*}
\begin{aligned}
\min_{\bm{x},\bm{y}\; \text{regular}}
&&&
\theta;
\\
\text{s.t.}
&&&
\bm{C}_{\theta}\bm{x}
\oplus
\bm{q}
\leq
\bm{y}
\leq
\bm{r},
\\
&&&
\bm{D}_{1}^{-}\bm{y}
\oplus
\bm{g}
\leq
\bm{x}
\leq
\bm{h}.
\end{aligned}
\end{equation*}

With the matrix-vector notation
\begin{equation*}
\bm{G}_{\theta}
=
\begin{pmatrix}
\bm{0} & \bm{D}_{1}^{-}
\\
\bm{C}_{\theta} & \bm{0}
\end{pmatrix},
\quad
\bm{s}
=
\begin{pmatrix}
\bm{g}
\\
\bm{q}
\end{pmatrix},
\quad
\bm{t}
=
\begin{pmatrix}
\bm{h}
\\
\bm{r}
\end{pmatrix},
\quad
\bm{z}
=
\begin{pmatrix}
\bm{x}
\\
\bm{y}
\end{pmatrix},
\end{equation*}
the problem can be rewritten in the form
\begin{equation*}
\begin{aligned}
\min_{\bm{z}\; \text{regular}}
&&&
\theta;
\\
\text{s.t.}
&&&
\bm{G}_{\theta}\bm{z}\oplus\bm{s}
\leq
\bm{z}
\leq
\bm{t}.
\end{aligned}
\end{equation*}

It follows from Proposition~\ref{P-Axbleqxleqd} that the inequality constraint has regular solutions if and only if the condition $\mathop\mathrm{Tr}(\bm{G}_{\theta})\oplus\bm{t}^{-}\bm{G}_{\theta}^{\ast}\bm{s}\leq\mathbb{1}$ holds, and for any feasible values of the parameter $\theta$, all solutions are given by
\begin{equation}
\bm{z}
=
\bm{G}_{\theta}^{\ast}\bm{w},
\quad
\bm{s}\leq\bm{w}\leq(\bm{t}^{-}\bm{G}_{\theta}^{\ast})^{-},
\quad
\bm{w}
=
\begin{pmatrix}
\bm{u}
\\
\bm{v}
\end{pmatrix}.
\label{E-z}
\end{equation}

We solve the inequality $\mathop\mathrm{Tr}(\bm{G}_{\theta})\oplus\bm{t}^{-}\bm{G}_{\theta}^{\ast}\bm{s}\leq\mathbb{1}$ for the parameter $\theta$, and then find the minimum of $\theta$. We replace the inequality by the equivalent pair of two inequalities
\begin{equation*}
\mathop\mathrm{Tr}(\bm{G}_{\theta})
\leq
\mathbb{1},
\qquad
\bm{t}^{-}\bm{G}_{\theta}^{\ast}\bm{s}
\leq
\mathbb{1}.
\end{equation*}

Consider the first inequality and apply Proposition~\ref{R-TrA} to obtain
\begin{equation*}
\mathop\mathrm{Tr}(\bm{G}_{\theta})
=
\mathop\mathrm{Tr}(\bm{C}_{\theta}\bm{D}_{1}^{-})
=
\mathop\mathrm{Tr}(\theta^{-1}\bm{P}\oplus\bm{Q})
=
\bigoplus_{k=1}^{\min(m,n)}
\mathop\mathrm{tr}(\theta^{-1}\bm{P}\oplus\bm{Q})^{k}
\leq
\mathbb{1}.
\end{equation*}

Furthermore, we use identity \eqref{E-oplus1ptrABk} to rewrite the inequality as
\begin{equation*}
\bigoplus_{k=1}^{\min(m,n)}
\bigoplus_{0\leq i_{0}+i_{1}+\cdots+i_{k}\leq\min(m,n)-k}
\theta^{-k}
\mathop\mathrm{tr}
(\bm{Q}^{i_{0}}(\bm{P}\bm{Q}^{i_{1}}\cdots\bm{P}\bm{Q}^{i_{k}}))
\oplus
\mathop\mathrm{Tr}
(\bm{Q})
\leq
\mathbb{1}.
\end{equation*}

Since $\mathop\mathrm{Tr}(\bm{Q})\leq\mathbb{1}$ by assumption, we need to solve for $\theta$ the inequality
\begin{equation*}
\bigoplus_{k=1}^{\min(m,n)}
\bigoplus_{0\leq i_{0}+i_{1}+\cdots+i_{k}\leq\min(m,n)-k}
\theta^{-k}
\mathop\mathrm{tr}
(\bm{Q}^{i_{0}}(\bm{P}\bm{Q}^{i_{1}}\cdots\bm{P}\bm{Q}^{i_{k}}))
\leq
\mathbb{1}.
\end{equation*}

The solution takes the form
\begin{equation*}
\theta
\geq
\bigoplus_{k=1}^{\min(m,n)}
\bigoplus_{0\leq i_{0}+i_{1}+\cdots+i_{k}\leq\min(m,n)-k}
\mathop\mathrm{tr}\nolimits^{1/k}
(\bm{Q}^{i_{0}}(\bm{P}\bm{Q}^{i_{1}}\cdots\bm{P}\bm{Q}^{i_{k}})).
\end{equation*}

To solve the inequality $\bm{t}^{-}\bm{G}_{\theta}^{\ast}\bm{s}\leq\mathbb{1}$, we apply Proposition~\ref{R-Aast} to write
\begin{equation*}
\bm{G}_{\theta}^{\ast}
\\=
\bigoplus_{k=0}^{\min(m,n)}
\begin{pmatrix}
(\bm{D}_{1}^{-}\bm{C}_{\theta})^{k} & \bm{D}_{1}^{-}(\bm{C}_{\theta}\bm{D}_{1}^{-})^{k}
\\
\bm{C}_{\theta}(\bm{D}_{1}^{-}\bm{C}_{\theta})^{k} & (\bm{C}_{\theta}\bm{D}_{1}^{-})^{k}
\end{pmatrix}.
\end{equation*}

Observing that $\bm{C}_{\theta}(\bm{D}_{1}^{-}\bm{C}_{\theta})^{k}=\theta^{-1}\bm{A}(\bm{D}_{1}^{-}\bm{C}_{\theta})^{k}\oplus\bm{C}_{1}(\bm{D}_{1}^{-}\bm{C}_{\theta})^{k}$, we obtain
\begin{multline*}
\bm{t}^{-}\bm{G}_{\theta}^{\ast}\bm{s}
=
\bigoplus_{k=0}^{\min(m,n)}
(\bm{r}^{-}\bm{C}_{1}
\oplus
\bm{h}^{-})
(\bm{D}_{1}^{-}\bm{C}_{\theta})^{k}
\bm{g}
\\\oplus
\bigoplus_{k=0}^{\min(m,n)}
(\bm{h}^{-}\bm{D}_{1}^{-}
\oplus
\bm{r}^{-})
(\bm{C}_{\theta}\bm{D}_{1}^{-})^{k}
\bm{q}
\oplus
\bigoplus_{k=0}^{\min(m,n)}
\theta^{-1}
\bm{r}^{-}
\bm{A}(\bm{D}_{1}^{-}\bm{C}_{\theta})^{k}
\bm{g}.
\end{multline*}

By using the identities $\bm{D}_{1}^{-}\bm{C}_{\theta}=\theta^{-1}\bm{R}\oplus\bm{S}$ and $\bm{C}_{\theta}\bm{D}_{1}^{-}=\theta^{-1}\bm{P}\oplus\bm{Q}$, we represent the inequality as
\begin{multline*}
\bigoplus_{k=0}^{\min(m,n)}
(\bm{r}^{-}\bm{C}_{1}
\oplus
\bm{h}^{-})
(\theta^{-1}\bm{R}\oplus\bm{S})^{k}
\bm{g}
\\\oplus
\bigoplus_{k=0}^{\min(m,n)}
(\bm{h}^{-}\bm{D}_{1}^{-}
\oplus
\bm{r}^{-})
(\theta^{-1}\bm{P}\oplus\bm{Q})^{k}
\bm{q}
\oplus
\bigoplus_{k=0}^{\min(m,n)}
\theta^{-1}
\bm{r}^{-}\bm{A}
(\theta^{-1}\bm{R}\oplus\bm{S})^{k}
\bm{g}
\leq
\mathbb{1},
\end{multline*}
which is equivalent to the system
\begin{equation}
\begin{aligned}
(\bm{r}^{-}\bm{C}_{1}
\oplus
\bm{h}^{-})\bm{g}
\oplus
(\bm{h}^{-}\bm{D}_{1}^{-}
\oplus
\bm{r}^{-})\bm{q}
&\leq
\mathbb{1},
\\
\bigoplus_{k=1}^{\min(m,n)}
(\bm{r}^{-}\bm{C}_{1}
\oplus
\bm{h}^{-})
(\theta^{-1}\bm{R}\oplus\bm{S})^{k}
\bm{g}
&\leq
\mathbb{1},
\\
\bigoplus_{k=1}^{\min(m,n)}
(\bm{h}^{-}\bm{D}_{1}^{-}
\oplus
\bm{r}^{-})
(\theta^{-1}\bm{P}\oplus\bm{Q})^{k}
\bm{q}
&\leq
\mathbb{1},
\\
\bigoplus_{k=0}^{\min(m,n)}
\theta^{-1}
\bm{r}^{-}\bm{A}
(\theta^{-1}\bm{R}\oplus\bm{S})^{k}
\bm{g}
&\leq
\mathbb{1}.
\end{aligned}
\label{I-tGs1}
\end{equation}

From the assumption of the theorem and the definition of the Kleene star matrix which yields $\bm{Q}^{\ast}\geq\bm{I}$, $\bm{S}^{\ast}\geq\bm{I}$, it follows that the first inequality in the system is satisfied. We examine the second inequality and apply identity \eqref{E-oplus1pABk} to rewrite this inequality as
\begin{multline*}
\bigoplus_{k=1}^{\min(m,n)}
\bigoplus_{0\leq i_{0}+i_{1}+\cdots+i_{k}\leq \min(m,n)-k}
\theta^{-k}
(\bm{r}^{-}\bm{C}_{1}
\oplus
\bm{h}^{-})
\bm{S}^{i_{0}}(\bm{R}\bm{S}^{i_{1}}\cdots\bm{R}\bm{S}^{i_{k}})
\bm{g}
\\\oplus
\bigoplus_{k=1}^{\min(m,n)}
(\bm{r}^{-}\bm{C}_{1}
\oplus
\bm{h}^{-})
\bm{S}^{k}
\bm{g}
\leq
\mathbb{1},
\end{multline*}
which is equivalent to the system of inequalities
\begin{align*}
\bigoplus_{k=1}^{\min(m,n)}
\bigoplus_{0\leq i_{0}+i_{1}+\cdots+i_{k}\leq \min(m,n)-k}
\theta^{-k}
(\bm{r}^{-}\bm{C}_{1}
\oplus
\bm{h}^{-})
\bm{S}^{i_{0}}(\bm{R}\bm{S}^{i_{1}}\cdots\bm{R}\bm{S}^{i_{k}})
\bm{g}
&\leq
\mathbb{1},
\\
\bigoplus_{k=1}^{\min(m,n)}
(\bm{r}^{-}\bm{C}_{1}
\oplus
\bm{h}^{-})
\bm{S}^{k}
\bm{g}
&\leq
\mathbb{1}.
\end{align*}

The second inequality in the last system is valid by the assumption of the theorem. The solution of the first with respect to $\theta$ is given by
\begin{equation*}
\theta
\geq
\bigoplus_{k=1}^{\min(m,n)}
\bigoplus_{0\leq i_{0}+i_{1}+\cdots+i_{k}\leq \min(m,n)-k}
((\bm{r}^{-}\bm{C}_{1}
\oplus
\bm{h}^{-})
\bm{S}^{i_{0}}(\bm{R}\bm{S}^{i_{1}}\cdots\bm{R}\bm{S}^{i_{k}})
\bm{g})^{1/k}.
\end{equation*}

In the same way, we solve the third and fourth inequalities at \eqref{I-tGs1} to obtain
\begin{align*}
\theta
&\geq
\bigoplus_{k=1}^{\min(m,n)}
\bigoplus_{0\leq i_{0}+i_{1}+\cdots+i_{k}\leq \min(m,n)-k}
((\bm{h}^{-}\bm{D}_{1}^{-}
\oplus
\bm{r}^{-})
\bm{Q}^{i_{0}}(\bm{P}\bm{Q}^{i_{1}}\cdots\bm{P}\bm{Q}^{i_{k}})
\bm{q})^{1/k},
\\
\theta
&\geq
\bigoplus_{k=0}^{\min(m,n)}
\bigoplus_{0\leq i_{0}+i_{1}+\cdots+i_{k}\leq \min(m,n)-k}
(\bm{r}^{-}\bm{A}
\bm{S}^{i_{0}}(\bm{R}\bm{S}^{i_{1}}\cdots\bm{R}\bm{S}^{i_{k}})
\bm{g})^{1/(k+1)}.
\end{align*}

By combining all lower bounds derived for $\theta$, we obtain the minimum $\eta$ in the problem given by \eqref{e:eta}.

Let us now consider the solution given by \eqref{E-z}, where we replace $\theta$ by $\eta$. Specifically, with $\bm{C}_{\eta}=\eta^{-1}\bm{A}\oplus\bm{C}_{1}$, we write
\begin{equation*}
\bm{G}_{\eta}^{\ast}
=
\begin{pmatrix}
(\bm{D}_{1}^{-}\bm{C}_{\eta})^{\ast} & (\bm{D}_{1}^{-}\bm{C}_{\eta})^{\ast}\bm{D}_{1}^{-}
\\
(\bm{C}_{\eta}\bm{D}_{1}^{-})^{\ast}\bm{C}_{\eta} & (\bm{C}_{\eta}\bm{D}_{1}^{-})^{\ast}
\end{pmatrix}
=
\begin{pmatrix}
(\bm{D}_{1}^{-}\bm{C}_{\eta})^{\ast} & \bm{D}_{1}^{-}(\bm{C}_{\eta}\bm{D}_{1}^{-})^{\ast}
\\
\bm{C}_{\eta}(\bm{D}_{1}^{-}\bm{C}_{\eta})^{\ast} & (\bm{C}_{\eta}\bm{D}_{1}^{-})^{\ast}
\end{pmatrix}.
\end{equation*}

With $\bm{D}_{1}^{-}\bm{C}_{\eta}=\eta^{-1}\bm{R}\oplus\bm{S}$ and $\bm{C}_{\eta}\bm{D}_{1}^{-}=\eta^{-1}\bm{P}\oplus\bm{Q}$, we finally have
\begin{align*}
\bm{x}
&=
(\eta^{-1}\bm{R}\oplus\bm{S})^{\ast}
(\bm{u}
\oplus
\bm{D}_{1}^{-}\bm{v}),
\\
\bm{y}
&=
(\eta^{-1}\bm{P}\oplus\bm{Q})^{\ast}
((\eta^{-1}\bm{A}\oplus\bm{C}_{1})\bm{u}
\oplus
\bm{v}).
\end{align*}

The conditions on the vectors of parameters $\bm{u}$ and $\bm{v}$ take the form
\begin{align*}
\bm{g}
&\leq
\bm{u}
\leq
((\bm{h}^{-}
\oplus
\bm{r}^{-}
(\eta^{-1}\bm{A}\oplus\bm{C}_{1}))(\eta^{-1}\bm{R}\oplus\bm{S})^{\ast})^{-},
\\
\bm{q}
&\leq
\bm{v}
\leq
((\bm{h}^{-}
\bm{D}_{1}^{-}
\oplus
\bm{r}^{-})
(\eta^{-1}\bm{P}\oplus\bm{Q})^{\ast})^{-},
\end{align*}
which completes the proof.
\qed
\end{proof}

\section{Overall solution and computational complexity}
\label{s:complexity}

In this section, we summarize the solution offered by Lemma~\ref{L:lower-level} and Theorem~\ref{T:upper-level} as a complete computational scheme and evaluate its computational complexity.

\begin{enumerate}
\item
The solution of the first-stage problem involves the following phases.
\begin{enumerate}
\item
The solution starts with checking the existence condition
\begin{equation*}
\bm{h}^{-}\bm{g}\oplus(\bm{h}^{-}\bm{D}^{-}\oplus\bm{r}^{-})\bm{q}
\leq
\mathbb{1}.
\end{equation*}
\item
If this condition is met, we compute the minimum value of the problem
\begin{multline*}
\mu
=
\rho(\bm{C}\bm{D}^{-})
\oplus
\bigoplus_{k=1}^{\min(m,n)}
\left(
\bm{h}^{-}(\bm{D}^{-}\bm{C})^{k}\bm{g}
\right)^{1/k}
\\\oplus
\bigoplus_{k=1}^{\min(m,n)}
\left(
(\bm{h}^{-}\bm{D}^{-}
\oplus
\bm{r}^{-})(\bm{C}\bm{D}^{-})^{k}\bm{q}
\right)^{1/k}
\oplus
\bigoplus_{k=0}^{\min(m,n)}
(\bm{r}^{-}\bm{C}(\bm{D}^{-}\bm{C})^{k}\bm{g})^{1/(k+1)},
\end{multline*}
otherwise the first-stage problem does not have regular solutions.
\end{enumerate}
\item
The solution of the second-stage problem goes as follows.
\begin{enumerate}
\item
The solution starts with calculating the matrices
\begin{gather*}
\bm{D}_{1}^{-}
=
\bm{B}^{-}\oplus\bm{D}^{-},
\qquad
\bm{C}_{1}
=
\mu^{-1}\bm{C},
\\
\bm{P}
=
\bm{A}\bm{D}_{1}^{-},
\qquad
\bm{Q}
=
\bm{C}_{1}\bm{D}_{1}^{-},
\qquad
\bm{R}
=
\bm{D}_{1}^{-}\bm{A},
\qquad
\bm{S}
=
\bm{D}_{1}^{-}\bm{C}_{1}.
\end{gather*}
\item
Next, we check the existence condition
\begin{equation*}
\mathop\mathrm{Tr}(\bm{Q})
\oplus
(\bm{h}^{-}\bm{D}_{1}^{-}
\oplus
\bm{r}^{-})\bm{Q}^{\ast}\bm{q}
\oplus
(\bm{r}^{-}\bm{C}_{1}
\oplus
\bm{h}^{-})\bm{S}^{\ast}\bm{g}
\leq
\mathbb{1}.
\end{equation*}
\item
If this condition is met, we compute the minimum value of the problem
\begin{multline*}
\eta
=
\bigoplus_{k=1}^{\min(m,n)}
\bigoplus_{0\leq i_{0}+i_{1}+\cdots+i_{k}\leq\min(m,n)-k}
\mathop\mathrm{tr}\nolimits^{1/k}
(\bm{Q}^{i_{0}}(\bm{P}\bm{Q}^{i_{1}}\cdots\bm{P}\bm{Q}^{i_{k}}))
\\\oplus
\bigoplus_{k=1}^{\min(m,n)}
\bigoplus_{0\leq i_{0}+i_{1}+\cdots+i_{k}\leq \min(m,n)-k}
((\bm{r}^{-}\bm{C}_{1}
\oplus
\bm{h}^{-})
\bm{S}^{i_{0}}(\bm{R}\bm{S}^{i_{1}}\cdots\bm{R}\bm{S}^{i_{k}})
\bm{g})^{1/k}
\\\oplus
\bigoplus_{k=1}^{\min(m,n)}
\bigoplus_{0\leq i_{0}+i_{1}+\cdots+i_{k}\leq \min(m,n)-k}
((\bm{h}^{-}\bm{D}_{1}^{-}
\oplus
\bm{r}^{-})
\bm{Q}^{i_{0}}(\bm{P}\bm{Q}^{i_{1}}\cdots\bm{P}\bm{Q}^{i_{k}})
\bm{q})^{1/k}
\\\oplus
\bigoplus_{k=0}^{\min(m,n)}
\bigoplus_{0\leq i_{0}+i_{1}+\cdots+i_{k}\leq \min(m,n)-k}
(\bm{r}^{-}\bm{A}
\bm{S}^{i_{0}}(\bm{R}\bm{S}^{i_{1}}\cdots\bm{R}\bm{S}^{i_{k}})
\bm{g})^{1/(k+1)},
\end{multline*}
otherwise the two-stage problem does not have regular solutions.
\item
To obtain a solution, we first evaluate upper bounds in the conditions
\begin{align*}
\bm{g}
&\leq
\bm{u}
\leq
((\bm{h}^{-}
\oplus
\bm{r}^{-}
(\eta^{-1}\bm{A}\oplus\bm{C}_{1}))(\eta^{-1}\bm{R}\oplus\bm{S})^{\ast})^{-},
\nonumber\\
\bm{q}
&\leq
\bm{v}
\leq
((\bm{h}^{-}
\bm{D}_{1}^{-}
\oplus
\bm{r}^{-})
(\eta^{-1}\bm{P}\oplus\bm{Q})^{\ast})^{-}.
\end{align*}
\item
Taking vectors $\bm{u}$ and $\bm{v}$ to satisfy these conditions, we find a solution of the entire problems by writing
\begin{align*}
\bm{x}
&=
(\eta^{-1}\bm{R}\oplus\bm{S})^{\ast}
(\bm{u}
\oplus
\bm{D}_{1}^{-}\bm{v}),
\nonumber\\
\bm{y}
&=
(\eta^{-1}\bm{P}\oplus\bm{Q})^{\ast}
((\eta^{-1}\bm{A}\oplus\bm{C}_{1})\bm{u}
\oplus
\bm{v}).
\end{align*}
\end{enumerate}
\end{enumerate}

Below, we evaluate the computational complexity of the solution by estimating the complexity which is involved in the solution of the first- and second-stage problems.

\subsection{First-stage problem}

We start with estimating the complexity of the calculation of the minimum $\mu$. We consider the first term in the form of the spectral radius $\rho(\bm{C}\bm{D}^{-})=\rho(\bm{D}^{-}\bm{C})$ and observe that calculating the matrices $\bm{C}\bm{D}^{-}$ and $\bm{D}^{-}\bm{C}$ respectively requires $O(m^{2}n)$ and $O(mn^{2})$ operations.

To reduce calculation, we use the matrix of the minimal order equal to $\min(m,n)$, which involves less number of operations given by $O(mn\min(m,n))$. Since the evaluation of spectral radius for this matrix using Karp's algorithm requires $O(\min(m,n)^{3})$ operations, which is not greater than $O(mn\min(m,n))$, the computational complexity of the first term is $O(mn\min(m,n))$.

Let us verify that the complexity of  calculating the other two terms has the same order. To examine the complexity for these terms, it is sufficient to consider only one of them, say the sum
\begin{equation*}
\bigoplus_{k=1}^{\min(m,n)}
\left(
\bm{h}^{-}(\bm{D}^{-}\bm{C})^{k}\bm{g}
\right)^{1/k},
\end{equation*}
since the other sum requires the same order of operations. Consider the term under summation and suppose that $m\leq n$. In this case, the calculation of the term $\bm{h}^{-}(\bm{D}^{-}\bm{C})^{k}\bm{g}$ for $k>1$ involves the following series of left multiplication of $(m\times m)$-matrices by $m$-vectors:
\begin{equation*}
(\bm{h}^{-}\bm{D}^{-})(\bm{C}\bm{D}^{-}),(\bm{h}^{-}\bm{D}^{-}\bm{C}\bm{D}^{-})(\bm{C}\bm{D}^{-}),\ldots,
\end{equation*}
each taking $O(m^{2})$ operations. As a result, the entire sum requires $O(m^{3})$ operations.

Under the condition $m>n$, the calculation of this term can be arranged as the series of multiplications
\begin{equation*}
\bm{h}^{-}(\bm{D}^{-}\bm{C}),(\bm{h}^{-}\bm{D}^{-}\bm{C})(\bm{D}^{-}\bm{C}),\ldots,
\end{equation*}
which leads to the complexity of order $O(n^{3})$. We combine both orders as $O(\min(m,n)^{3})$.

The number $O(mn\min(m,n))$ of operations required to calculate the smaller of the matrices $\bm{C}\bm{D}^{-}$ and $\bm{D}^{-}\bm{C}$ is not less than $O(\min(m,n)^{3})$. As a result, the order of complexity for the sum considered is $O(mn\min(m,n))$, which also represents the overall order of complexity for $\mu$.

\subsection{Second-stage problem}

We now evaluate the computational complexity of the solution of the second-stage problem. We observe that calculating the matrices $\bm{P}=\bm{A}\bm{D}_{1}^{-}$ and $\bm{Q}=\bm{C}_{1}\bm{D}_{1}^{-}$ of order $m$ involves $O(m^{2}n)$ operations, and the matrices $\bm{R}=\bm{D}_{1}^{-}\bm{A}$ and $\bm{S}=\bm{D}_{1}^{-}\bm{C}_{1}$ of order $n$ involves $O(n^{2}m)$ operations.

First, we consider the existence condition and observe that
\begin{equation*}
\mathop\mathrm{Tr}(\bm{Q})
=
\mathop\mathrm{Tr}(\bm{C}_{1}\bm{D}_{1}^{-})
=
\mathop\mathrm{Tr}(\bm{D}_{1}^{-}\bm{C}_{1})
=
\mathop\mathrm{Tr}(\bm{S}).
\end{equation*}

Similarly to the first stage, we choose out of $\bm{Q}$ and $\bm{S}$ the matrix which has the smaller order $\min(m,n)$, and observe that the calculation of this matrix requires $O(mn\min(m,n))$ operations. Let us verify that the calculation of the left-hand side of the existence condition involves no more than $O(mn)+O(\min(m,n)^3)$ operations.

The calculation of Kleene matrix using the Floyd-Warshall algorithm has the computational complexity of the third degree. Therefore, if $m\leq n$, we can obtain $\bm{Q}^{\ast}$ in $O(m^{3})$ operations. Since $\mathop\mathrm{Tr}(\bm{Q})=\mathop\mathrm{tr}(\bm{Q}\bm{Q}^{\ast})$, calculating $\mathop\mathrm{Tr}(\bm{Q})$ requires $O(m^{3})$ operations as well. Given a matrix $\bm{Q}^{\ast}$, the calculation of the scalar $(\bm{h}^{-}\bm{D}_{1}^{-}\oplus\bm{r}^{-})\bm{Q}^{\ast}\bm{q}$ involves $O(mn)$ operations.

We now consider the matrix $\bm{S}^{\ast}$ and write
\begin{equation*}
\bm{S}^{\ast}
=
\bigoplus_{k=0}^{n-1}
\bm{S}^{k}
=
\bigoplus_{k=0}^{n}
\bm{S}^{k}
=
\bm{I}
\oplus
\bm{D}_{1}^{-}
\bigoplus_{k=0}^{n-1}
(\bm{C}_{1}
\bm{D}_{1}^{-})^{k}
\bm{C}_{1}
=
\bm{I}
\oplus
\bm{D}_{1}^{-}
\bm{Q}^{\ast}
\bm{C}_{1}.
\end{equation*}
Using this expression we write
\begin{equation*}
(\bm{r}^{-}\bm{C}_{1}
\oplus
\bm{h}^{-})\bm{S}^{\ast}\bm{g}=(\bm{r}^{-}\bm{C}_{1}
\oplus
\bm{h}^{-})(\bm{I}
\oplus
\bm{D}_{1}^{-}
\bm{Q}^{\ast}
\bm{C}_{1})\bm{g}\end{equation*}

The calculation of this scalar also takes $O(mn)$ operations (being dominated by the calculation of $\bm{h}^-\bm{D}_1^-$ and
$\bm{C}_{1}\bm{g}$). Thus the existence condition can be verified in $O(mn)+O(m^3)$ operations if $m\leq n$. Combining this with a similar estimate for the case $n\leq m$, we obtain that verifying the existence condition takes $O(mn)+O(\min(m,n)^3)$ operations, provided that $\bm{Q}$ or $\bm{S}$ have been computed previously.


To evaluate the complexity of calculating the minimum $\eta$, we examine four matrix sums, which add up to provide $\eta$. We estimate the number of operations required to calculate each of the sums by applying Corollary~\ref{C:complexity} with $p$ replaced by $\min(m,n)$ and $n$ by $\max(m,n)$.

We observe that taking traces of matrices, left and right multiplication of matrices by vectors and taking roots of the scalars obtained do not increase the order of computational complexity which is determined by the number of summands in the matrix sums. After application of Corollary~\ref{C:complexity}, we find the computational complexity of calculating $\eta$ does not exceed $O(\min(m,n)^{2}\max(m,n)^{3})$ or equivalently $O(m^{2}n^{2}\max(m,n))$. This order is higher than those involved in the other parts of solution, and thus gives an upper bound for the computational complexity of the overall solution obtained for the two-stage problem under consideration.

The remarks above refer to finding the optimal value of the problem. As for the set of optimal solutions, we note that
it is a closed and bounded tropically convex set,  which is also convex in the usual sense and thus an alcoved polyhedron \cite{MariaJesus}. In particular, the description that we give in \eqref{e:xy} and \eqref{e:uv} enables one to write out extremal points of that tropical convex set, see e.\,g., \cite{GK-07} and \cite{KNS-14}, for definitions and more background on tropical convexity and tropical extremal points. The extremal points of solution set described by \eqref{e:xy} and \eqref{e:uv} are determined by the upper and lower bounds of the box described by \eqref{e:uv} and their number does not exceed $m+n+1$.

\section{Discussion}

Theorem \ref{T:upper-level} provides an explicit solution to \eqref{P-Bi-Level}. The same technique can be used to solve some extended versions of \eqref{P-Bi-Level}: note that we can add constraints in the form $\bm{E}\bm{x}\leq\bm{y}$ or $\bm{E}\bm{u}\leq\bm{v}$, where $\bm{E}$ is a matrix of appropriate dimensions, or consider a number of similar projects on the first stage instead of just one project. Since the techniques that would be applied and the results of such application are very similar to what is presented above, we are not giving further details of such extensions here.

Unlike the problem that we considered, the two-stage optimization problems considered in the literature usually have a parametric optimization problem on the first stage, where some of the parameters are second-stage decision variables. The problems of this kind, with the objective functions as in the present paper and possibly more elaborate constraints, offer a promising direction for research.

Furthemore, the problem solved in the present paper could be rather easily reduced to a one-stage optimization problem, since the solution set of the first-stage problem could be concisely and explicitly written out.  In the two-stage optimization, the reduction of two-stage problem to a one-stage problem (if it exists) is often due to duality and complementary slackness conditions, which are lacking in tropical mathematics. Also, such reduction often introduces non-linear constraints, which was the case for the tropical bi-level optimization problems considered in~\cite{SLiu-20}, and this will very likely present a formidable challenge for the future development of two-stage and multi-stage optimization problems in tropical mathematics.

\section*{Acknowledgement}
The work of the second author was supported by EPSRC Grant EP/P019676/1. The authors would like to thank Dr. \v{S}tefan Bere\v{z}n\'{y}, with whom some aspects of this work were discussed during his stay in Birmingham in 2018.

\bibliographystyle{abbrvurl}
\bibliography{Minimizing_maximum_lateness_in_bi-level_projects_by_tropical_optimization}

\end{document}